\newtheorem{thm}{Theorem}[section]
\newtheorem{lem}[thm]{Lemma}
\newtheorem{prop}[thm]{Proposition}
\newtheorem{conj}[thm]{Conjecture}
\theoremstyle{definition}
\newtheorem{examp}[thm]{Example}
\theoremstyle{remark}
\newtheorem{rem}[thm]{Remark}
\DeclareMathOperator{\tr}{tr}
\DeclareMathOperator{\disc}{disc}
\DeclareMathOperator{\Diag}{Diag}
\DeclareMathOperator{\ind}{ind}
\DeclareMathOperator{\Mat}{Mat}
\DeclareMathOperator{\Inv}{Inv}
\newcommand{\GL}{\mathrm{GL}}
\newcommand{\SL}{\mathrm{SL}}
\newcommand{\DD}{\mathrm{DD}}
\newcommand{\CC}{\mathbb{C}}
\newcommand{\FF}{\mathbb{F}}
\newcommand{\QQ}{\mathbb{Q}}
\newcommand{\RR}{\mathbb{R}}
\newcommand{\ZZ}{\mathbb{Z}}
\newcommand{\B}{\mathcal{B}}
\newcommand{\ba}{\overline}
\newcommand{\cross}{\times}
\newcommand{\tensor}{\otimes}
\renewcommand{\to}{\mathop{\rightarrow}\limits}
\let\left\mleft
\let\right\mright
\newcommand{\size}[1]{\lvert #1 \rvert}
\newcommand{\Size}[1]{\left\lvert #1 \right\rvert}
\newcommand{\norm}[1]{\lVert #1 \rVert}
\newcommand{\Norm}[1]{\left\lVert #1 \right\rVert}
\newcommand{\intsec}{\cap}
\newcommand{\nequiv}{\not\equiv}
\renewcommand{\>}{\right\rangle}
\newcommand{\ignore}[1]{}
\renewcommand{\epsilon}{\varepsilon}
\title{Galois groups of random integer matrices}
\author{Theresa C.\ Anderson and Evan M.\ O'Dorney}
\date{\today}
\begin{document}

\maketitle

\begin{abstract}
  We study the number $M_n(T)$ be the number of integer $n\times n$ matrices $A$ with entries bounded in absolute value by $T$ such that the Galois group of characteristic polynomial of $A$ is not the full symmetric group $S_n$. One knows $M_n(T) \gg T^{n^2 - n + 1} \log T$, which we conjecture is sharp. We first use the large sieve to get $M_n(T) \ll T^{n^2 - 1/2}\log T$. Using Fourier analysis and the geometric sieve, as in Bhargava's proof of van der Waerden's conjecture, we improve this bound for some classes of $A$.
\end{abstract}

\section{Introduction}
In recent years there has been a resurgence in the study of polynomials with non-generic Galois group, in particular the following noted breakthrough.

\begin{thm}[Bhargava \cite{Bhargava_vdW}, Theorem 1; conjectured by van der Waerden \cite{vdW1936}] \label{thm:vdW-Bh}
For an integer $n \geq 2$, let $E_n(T)$ be the number of monic integer polynomials
\[
  f(x) = x^n + a_1 x^{n-1} + \cdots + a_n, \quad |a_i| \leq T
\]
with Galois group strictly smaller than the full symmetric group $S_n$, where $T \to \infty$. Then
\[
  E_n(T) \asymp T^{n-1}.
\]
\end{thm}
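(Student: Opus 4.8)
\emph{Proof plan.} The lower bound is immediate: the $(2T+1)^{n-1}\asymp T^{n-1}$ monic polynomials $f(x)=x\,g(x)$ with $g$ monic of degree $n-1$ and all coefficients at most $T$ in absolute value are reducible, so their Galois groups act intransitively on the $n$ roots and are in particular not $S_n$; hence $E_n(T)\gg T^{n-1}$. The content is the matching upper bound $E_n(T)\ll T^{n-1}$, and my plan is to use that for $f$ with $\disc(f)\ne 0$ one has $\Gal(f)\ne S_n$ exactly when $\Gal(f)$ lies in a conjugate of some maximal subgroup $M\subsetneq S_n$, and the latter is detected by a resolvent: for each conjugacy class of maximal subgroups $M$ there is a monic $\mathcal R_M(x)\in\ZZ[a_1,\dots,a_n][x]$ of bounded degree $D_M=D_M(n)$ such that (outside an exceptional set of $a$ of size $O(T^{n-1})$) the group $\Gal(f_a)$ is contained in a conjugate of $M$ precisely when $\mathcal R_M(f_a)(x)$ has an integer root. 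Since the number of classes of maximal $M$ is finite for each $n$, and $\{a:\disc(f_a)=0\}$ is a hypersurface contributing $O(T^{n-1})$, it suffices to show
\[
  \#\bigl\{(a,\rho)\in\ZZ^{n}\times\ZZ:\ \|a\|_\infty\le T,\ \mathcal R_M(f_a)(\rho)=0\bigr\}\ \ll_n\ T^{n-1}
\]
for each maximal $M$.

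The maximal $M$ fall into three types. If $M$ is intransitive, $M=S_k\times S_{n-k}$ and the rational roots of $\mathcal R_M$ encode degree-$k$ factors of $f$: for $k=1$ one has $\mathcal R_M=f$ itself, and the count of integer pairs $(a,\rho)$ with $f_a(\rho)=0$ is $\asymp T^{n-1}$ (bounded $\rho$ give $\asymp T^{n-1}$, and the slices $\{a:f_a(\rho)=0\}\cap[-T,T]^n$ shrink quickly enough in $|\rho|$ that the total is $O(T^{n-1})$), while $k\ge 2$ is smaller and can be handled via Mahler-measure bounds on polynomial factors. If $M=A_n$ then $\mathcal R_M=x^2-\disc(f_a)$ and the condition reads $\disc(f_a)=\square$. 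If $M$ is imprimitive, $M=S_a\wr S_b$ with $ab=n$ and $a,b\ge 2$, and $\mathcal R_M$ is a block resolvent of large but bounded degree; the primitive maximal $M\ne A_n$ give resolvents of large degree as well, with the extra feature that $|M|\le 4^{n}$ makes such $f$ especially sparse.

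The crux is the estimate for $M=A_n$, imprimitive, or primitive --- counting the coefficient vectors $a$ for which the varying polynomial $\mathcal R_M(f_a)$ acquires an integer root. The naive large sieve --- sieving out, for each prime $p$, the positive-density set of residues $a\bmod p$ for which $\mathcal R_M(f_a)$ has a root in $\FF_p$ --- gives only $O(T^{n-1/2}\log T)$, the polynomial analogue of the bound in the abstract. To reach $T^{n-1}$ I would follow the strategy of \cite{Bhargava_vdW}: first apply the geometric (Ekedahl) sieve to discard the $a$ for which $\disc(f_a)$, and hence the relevant resolvent, acquires a large square factor --- an $O(T^{n-1})$ set --- and then, for the remaining $a$, whose discriminants are squarefree away from small primes, carry out an explicit lattice-point count on the resolvent hypersurfaces $\{\mathcal R_M(f_a)(\rho)=0\}$ in which Poisson summation over residue classes to small prime-square moduli keeps every error term uniform in the modulus and in $\rho$. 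I expect this uniform point count --- in particular controlling the singular loci of the resolvent hypersurfaces and making the sum over $\rho$ converge to $O(T^{n-1})$ rather than $O(T^{n-1+\epsilon})$ or $O(T^{n-1}\log T)$ --- to be the main obstacle, and the imprimitive groups $S_a\wr S_b$, whose resolvents have the highest degree, to be the hardest case. Granting it, summing over the finitely many $M$ and adding the reducible main term gives $E_n(T)\ll T^{n-1}$, which with the lower bound yields $E_n(T)\asymp T^{n-1}$.
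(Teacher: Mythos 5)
This statement is not proved in the paper at all: it is Bhargava's theorem, quoted verbatim as background with a citation to \cite{Bhargava_vdW}, so there is no internal proof to compare your write-up against. Judged on its own terms, your submission is a plan rather than a proof, and the gap sits exactly where you place it: the entire content of the theorem is the upper bound $E_n(T) \ll T^{n-1}$, and for the key estimate you explicitly write ``granting it.'' Your lower bound (the $\asymp T^{n-1}$ polynomials with $a_n = 0$) is fine, and the reduction to finitely many conjugacy classes of maximal subgroups $M \subsetneq S_n$ detected by resolvents is standard, but the claimed count of integer points $(a,\rho)$ on the resolvent hypersurfaces --- uniform in $\rho$ and in the prime-square moduli --- is precisely the part that nobody knows how to do; no Poisson-summation argument of the kind you sketch is known to deliver $O(T^{n-1})$, and the single case $M = A_n$ (i.e.\ $\disc f = \square$) resisted all attacks for decades before \cite{6author} and \cite{Bhargava_vdW}.

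Moreover, even as a roadmap the proposal points in a different direction from the actual proof. Bhargava does not work with resolvents of maximal subgroups. He exploits the factorization $\disc(f) = \disc(K_f)\cdot [\OO_{K_f} : \ZZ[x]/(f)]^2$ for irreducible $f$ and splits into cases according to the sizes of the field discriminant and of the index. The large-field-discriminant case is handled by the Ekedahl/geometric sieve applied to a double discriminant --- the device this paper adapts in Section \ref{sec:geosieve} --- together with Fourier analysis to remove the $\epsilon$; the small-field-discriminant case, which your plan omits entirely, is disposed of by multiplying Schmidt's bound on the number of fields of bounded discriminant by the Lemke Oliver--Thorne bound on the number of bounded-height polynomials cutting out a given field, exactly as the introduction here recounts when explaining why the analogous step fails for matrices. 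So the proposal both defers the crux and, where it does commit to a strategy for the hard cases, commits to one that is not known to work.
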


Theorem \ref{thm:vdW-Bh} follows decades of incremental advances on the problem \cite{Knobloch1955,Gallagher1972,CD2020,6author}.
The dominant contribution comes from $f$ having a rational root. The asymptotic probability is unaffected by whether or not we require $f$ to be monic.

This breakthrough concerns the ``large box model'' where $n$ is fixed and the height $T \to \infty$. Work has also been fruitful for the complementary ``restricted coefficient model'' where the coefficients $a_i$ are drawn from a fixed set and $n \to \infty$ \cite{BV19,BSK20,BSKK23}.

Many directions of generalization are possible. In a recent preprint \cite{ABO2024}, the authors and Bertelli proved the analogue of the van der Waerden--Bhargava theorem for \emph{reciprocal} polynomials, where the generic Galois group is the wreath product $S_2 \wr S_n$. We found that the most common non-generic Galois group is the index-$2$ subgroup $(S_2 \wr S_n) \intsec A_{2n} < S_{2n}$, which also yields the best known bounds for occurrence of non-generic Galois groups in the restricted coefficient model \cite{Hokken23}.

In this paper, we pursue a different direction: we look at the Galois groups of the characteristic polynomials $\chi_A$ of random $n \times n$ matrices $A$. These were considered by Eberhard \cite{Eberhard} under a natural analogue of the restricted coefficient model, showing that if the entries $a_{ij}$ are drawn independently from a finitely supported measure on $\ZZ$ satisfying some conditions, then with high probability as $n \to \infty$, the Galois group of $\chi_A$ is either $S_n$ or $A_n$. A follow-up paper of Ferber, Jain, Sah, and Sawhney \cite{FJSS} specializes to symmetric matrices, showing again that irreducibility of $\chi_A$ holds almost surely as $n \to \infty$. We recently learned of work of Bary-Soroker, Garzoni, and Sodin \cite{2025arXiv250217218B} that shows a similar result for tridiagonal matrices.  While these papers address the restricted entry model, we think it timely to revisit the large box model. Define $M_n(T)$ to be the number of integer $n\times n$ matrices $A$ with entries in $[-T, T]$ such that the characteristic polynomial $\chi_A$ of $A$ does not have Galois group the full $S_n$. We wish to improve upon the trivial bound $M_n(T) \ll T^{n^2}$ from counting all matrices.

One way that $\chi_A$ can fail to be of type $S_n$ is if it is reducible, that is, the action of $A$ on $\QQ^n$ has an invariant subspace defined over $\QQ$. Let $R_n(T) \leq M_n(T)$ be the number of such $A$. Rivin speculated that $R_n(T) \asymp T^{n^2 - n + 1} \log T$, the lower bound being known \cite[Conjecture 12]{Rivin06pre}. For unexplained reasons, this conjecture was removed in preparing the published version of the paper \cite{Rivin08}. However, we have seen no reason to doubt Rivin's conjecture on $R_n(T)$ and propose strengthening it to $M_n(T)$:
\begin{conj}\label{conj:M_n}
    Let $M_n(T)$ be the number of integer $n\times n$ matrices $A$ such that the characteristic polynomial $\chi_A$ of $A$ does not have Galois group the full $S_n$. Then as $T \to \infty$,
    \[
        M_n(T) \asymp T^{n^2 - n + 1} \log T.
    \]
\end{conj}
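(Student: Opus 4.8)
The plan is to stratify $M_n(T)$ according to how $\chi_A$ fails to have Galois group $S_n$, to pin down the dominant stratum against the known lower bound (essentially Rivin's), and to show every other stratum has strictly smaller order. Since $\chi_A$ is monic over $\ZZ$, a rational eigenvalue of $A$ is an integer, and since $\|A\|_\infty \leq T$ every eigenvalue has absolute value at most $nT$. For the lower bound, fix $\lambda \in \ZZ$ with $|\lambda| \leq T$: for each primitive $w \in \ZZ^n$ with $1 \leq \|w\|_2 \leq T$, choosing each of the $n$ rows of $A - \lambda I$ to be a lattice point of the hyperplane $w^\perp$ lying in the relevant box produces $\gg (T^{n-1}/\|w\|_2)^n$ matrices of rank $< n$ having $\lambda$ as an eigenvalue, hence with $\Gal(\chi_A) \neq S_n$; summing $\sum_{1 \leq \|w\|_2 \leq T}(T^{n-1}/\|w\|_2)^n \gg T^{n^2-n}\log T$ and then over the $\asymp T$ admissible $\lambda$ (each matrix arising for $O(n)$ of them) gives $M_n(T) \geq R_n(T) \gg T^{n^2-n+1}\log T$. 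The content of the conjecture is the matching upper bound.

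\emph{Stratum 1: $\chi_A$ has an integer root, the main term.} The number of such $A$ is at most $\sum_{\lambda \in \ZZ}S_n(T;\lambda)$, where $S_n(T;\lambda) := \#\{A \in [-T,T]^{n^2} : \det(A - \lambda I) = 0\}$. This vanishes for $|\lambda| > nT$, and for $|\lambda| \leq nT$ the entries of $A - \lambda I$ all lie in $[-(n+1)T, (n+1)T]$, so $S_n(T;\lambda)$ is at most the number of singular integer matrices in a box of side $(n+1)T$. The bound $\#\{B \in \Mat_n(\ZZ) : \|B\|_\infty \leq U,\ \det B = 0\} \ll_n U^{n^2-n}\log U$ — provable by the same geometry-of-numbers input as the lower bound, summing the lattice-point counts of the hyperplanes $w^\perp$ over primitive $w$ — then gives $S_n(T;\lambda) \ll_n T^{n^2-n}\log T$ uniformly in $\lambda$, hence $\ll_n T^{n^2-n+1}\log T$ after summing over the $\asymp T$ relevant $\lambda$.

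\emph{Strata 2 and 3.} If $\chi_A$ is reducible over $\QQ$ with no integer root then $n \geq 4$ and $A$ preserves a primitive sublattice $\Lambda \subseteq \ZZ^n$ of rank $k$ with $2 \leq k \leq n-2$ (namely $\ZZ^n$ meeting the span of a minimal $A$-invariant rational subspace). For fixed $\Lambda$ the condition $A\Lambda \subseteq \Lambda$ confines $A$ to a primitive sublattice of $\Mat_n(\ZZ)$ of rank $n^2 - k(n-k) \leq n^2 - 2(n-2)$, and summing the resulting lattice-point counts over primitive $\Lambda$ (using Schmidt-type estimates for the number of sublattices of bounded covolume) should give a total of strictly smaller order than $T^{n^2-n+1}$, around $T^{n^2-2n+4+o(1)}$. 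If instead $\chi_A$ is irreducible with Galois group $G$ a proper subgroup of $S_n$, the plan — in the spirit of Bhargava's proof of Theorem \ref{thm:vdW-Bh} — is to fiber over $f = \chi_A$, writing the count as $\sum_f N_f(T)\,\1[f\text{ irreducible},\ \Gal(f) = G \subsetneq S_n]$ with $N_f(T) = \#\{A \in [-T,T]^{n^2} : \chi_A = f\}$, and to combine an upper bound for $N_f(T)$ — the fiber of $A \mapsto \chi_A$ over a separable $f$ is a single $\GL_n$-conjugacy class of dimension $n^2 - n$, with $N_f(T)$ controllable via Latimer--MacDuffee in terms of the ideal classes of $\ZZ[x]/(f)$ and the sizes of their representatives — with Bhargava's sparsity estimate for monic integer polynomials having non-generic Galois group and no rational root.

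\emph{The main obstacle.} The crux is the irreducible stratum. Because the characteristic-polynomial map is very far from injective and $N_f(T)$ fluctuates wildly with $f$, the sparsity of the bad polynomials $f$ does not by itself bound the number of bad matrices $A$: one must rule out an anomalous concentration of $N_f(T)$ on those $f$. This appears to require either a genuine equidistribution statement for $\chi_A$ as $A$ ranges over $[-T,T]^{n^2}$ — the Fourier-analytic input alluded to in the abstract — or a geometric sieve tailored to the coincidence loci inside $\Mat_n$; at present such control is available only for restricted families of $A$, which is precisely why this paper obtains only $M_n(T) \ll T^{n^2-1/2}\log T$ in general, with sharper bounds for special classes.
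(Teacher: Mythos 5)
This statement is a \emph{conjecture} in the paper, not a theorem: the authors prove only the lower bound $M_n(T) \gg T^{n^2-n+1}\log T$ (Section \ref{sec:Zeig}, via Katznelson's count of singular matrices applied to $B = A - \lambda I$) together with various partial upper bounds (Theorems \ref{thm:large}, \ref{thm:red1}, \ref{thm:red_short}, \ref{thm:geosieve}). Your lower bound and your Stratum 1 upper bound are essentially the paper's argument and are fine, modulo citing Katznelson rather than re-deriving the singular-matrix count (the naive sum over hyperplanes $w^\perp$ needs care for $w$ of large norm, which is the content of Katznelson's theorem). Your treatment of Stratum 3 honestly identifies the central obstacle --- controlling the fibers $N_f(T)$ of the characteristic polynomial map well enough to transfer Bhargava's polynomial-counting results to matrices --- and this is indeed open; the best known uniform fiber bound is $T^{n^2-n-1+o(1)}$, far from the conjectured $T^{n(n-1)/2+o(1)}$, so the fibering strategy does not close.

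The concrete error is in Stratum 2. You assert that for fixed $\Lambda$ of rank $k$ the count of $A$ with $A\Lambda \subseteq \Lambda$ and $\norm{A}\leq T$ is governed by the rank $n^2 - k(n-k)$ of $\Inv_\Lambda\ZZ$, and that summing over $\Lambda$ via Schmidt ``should give'' roughly $T^{n^2-2n+4+o(1)}$. This is false as stated: $\Inv_\Lambda\ZZ$ is typically very skew, and the number of its points of norm $\leq T$ is not $\asymp T^{n^2-k(n-k)}/d(\Inv_\Lambda\ZZ)$. The paper's Example \ref{ex:n6k3} exhibits a lattice $\Lambda$ ($n=6$, $k=3$) for which the restriction $A|_\Lambda$ alone already has $\asymp T^{10}$ possibilities rather than the naive $T^{k^2}=T^9$, and more generally $\gg T^{\binom{k+2}{3}}$ for $n = \binom{k+1}{2}$. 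Handling this skewness, and the summation over the $T^{O(1)}$ relevant lattices $\Lambda$, is precisely why the paper only obtains $R_{n,k}(T) \ll T^{n^2 - \frac{n-k}{n+k-1}}$ in general (a saving of less than $1$ in the exponent, nowhere near $T^{n^2-2n+4}$), and reaches the conjectured order $T^{n^2-n+1}\log T$ only under the additional hypothesis that $\Lambda^\perp$ is generated by vectors of length $\leq T$. So both of your nontrivial strata are open, and the proposal is a strategy outline rather than a proof --- consistent with the fact that the statement is presented in the paper as a conjecture.
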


Similarly to the van der Waerden--Bhargava theorem, the lower bound is known and comes from counting matrices with an integer eigenvalue, as we explain in Section \ref{sec:Zeig} below.  It is interesting to note the necessity of the $\log$ factor, which is also present in \cite{ABO2024} when counting reciprocal polynomials, but absent in the van der Waerden--Bhargava theorem.  Refining these estimates into precise asymptotic formulas would be an exciting (and lofty) goal for future work.  

\subsection{Notation}
In this paper, the norm of a matrix is the $\infty$-norm, $\norm{[a_{ij}]} = \max_{i,j} \size{a_{ij}}$. Some literature uses the $2$-norm (Euclidean norm) instead; this only affects outlying constants, which are outside the scope of this paper.

If $A$ is a matrix, we denote by $\chi_A(x)$ its characteristic polynomial, $\det(xI - A)$. If $\chi_A$ is irreducible over $\QQ$, we denote by $K_A$ the attached number field $\QQ[x]/\chi_A(x)$. We let $G_A$ be the Galois group of $\chi_A$, a subgroup of $S_n$ that permutes the roots of $\chi_A$ (transitively if and only if $\chi_A$ is irreducible).

As usual, if $f(T), g(T)$ are real-valued functions of a sufficiently large real number $T$, then the notations
    \[
    f(T) \ll g(T), \quad g(T) \gg f(T), \quad f(T) = O\bigl(g(T)\bigr)
    \]
    mean that $\size{f(T)} < c \cdot \size{g(T)}$ for sufficiently large $T$ and some constant $c$, while the notations
    \[
    f(T) \asymp g(T), \quad f(T) = \Theta\bigl(g(T)\bigr)
    \]
    mean that $f(T) \ll g(T)$ and $f(T) \gg g(T)$. Finally, $f(T) = o\bigl(g(T)\bigr)$ means that $\lim_{T \to \infty} f(T)/g(T) = 0$. The implied constants $c$ may depend on the degree $n$ but not on the height $T$; if they depend on any other variables, we will include these variables in a subscript. 
\subsection{Results}
In this paper we make progress toward proving Conjecture \ref{conj:M_n}, which appears to be very hard to prove in general. We use a number of techniques, which are effective for different classes of matrices. In Section \ref{sec:CS_HIT}, we use the Cohen--Serre irreducibility theorem (an application of the large sieve) to get a power saving upon the trivial bound $M_n(T) \ll T^{n^2}$:
\begin{thm} \label{thm:large}
  We have the bound
  \begin{equation} \label{eq:large}
    M_n(T) \ll T^{n^2 - 1/2} \log T. 
  \end{equation}
\end{thm}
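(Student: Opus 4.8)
The plan is to recognize $\chi_A$ as the generic member of a family of degree-$n$ polynomials depending on the $n^2$ matrix entries as parameters, and to feed this family into a quantitative form of Hilbert's irreducibility theorem — the large-sieve bound of Gallagher and Cohen--Serre. Write $a_{ij}$ for the entries, now regarded as indeterminates, so that $\chi_A(x) = x^n - c_1 x^{n-1} + \dots + (-1)^n c_n$ with $c_k = c_k(a_{ij}) \in \ZZ[a_{ij}]$ the sum of the $k \times k$ principal minors. The first thing I would verify is that the generic Galois group $\Gal\bigl(\chi_A / \QQ(a_{ij})\bigr)$ is the \emph{full} $S_n$: the morphism $A \mapsto (c_1, \dots, c_n)$ from $\Mat_n$ to $\mathbb{A}^n$ is surjective on points (every monic polynomial is realized by its companion matrix), hence dominant, and its generic fibre — the locus of matrices with a fixed separable characteristic polynomial — is a single $\GL_n$-conjugacy class of diagonalizable matrices, hence geometrically irreducible; therefore $\QQ(c_1, \dots, c_n)$ is algebraically closed in $\QQ(a_{ij})$, and since the general monic polynomial $x^n - c_1 x^{n-1} + \dots$ has Galois group $S_n$ over $\QQ(c_1, \dots, c_n)$, the same holds over the larger field $\QQ(a_{ij})$.

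Granting this, the Cohen--Serre irreducibility theorem applied to our $n^2$-parameter family produces the bound $M_n(T) \ll_n T^{n^2 - 1/2}\log T$ directly, and it is worth recalling the mechanism. One sieves over primes $p$ in a dyadic interval around $T^{1/2}$: for each such $p$ one shows that every $A$ with $G_A \ne S_n$ reduces modulo $p$ into a subset $\Omega_p \subseteq \Mat_n(\FF_p)$ whose density is bounded below by a positive constant $\delta_n$ independent of $p$, whereupon the $n^2$-dimensional large-sieve inequality gives a count $\ll_{n} T^{n^2}\big/\bigl(\delta_n \cdot T^{1/2}/\log T\bigr) \ll_n T^{n^2 - 1/2}\log T$. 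To produce $\Omega_p$ I would split the bad matrices into three classes. If $\chi_A$ is reducible over $\QQ$, then (by Gauss's lemma) $\chi_A \bmod p$ is reducible for \emph{every} $p$, so $\overline{A}$ avoids the set of matrices with irreducible characteristic polynomial, which has density $\asymp 1/n$. If $\chi_A$ is irreducible but $G_A \ne S_n$, then $G_A$ is transitive and proper, hence contains no $(n-1)$-cycle or no transposition — because a transitive subgroup of $S_n$ containing an $(n-1)$-cycle has a transitive point stabilizer, so is $2$-transitive and therefore primitive, and a primitive subgroup containing a transposition is all of $S_n$. In the first subcase $\chi_A \bmod p$ is never squarefree of factorization type ``linear $\times$ irreducible of degree $n-1$'', and in the second never squarefree of type ``one irreducible quadratic, the rest linear'': indeed $\chi_A$, being irreducible over $\QQ$, is separable, so at primes of good reduction its factorization type is the cycle type of a Frobenius element of $G_A$, while at the remaining primes $\chi_A \bmod p$ fails to be squarefree and so is excluded anyway. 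Each forbidden type carves out a subset of $\Mat_n(\FF_p)$ of density bounded below by a positive constant depending only on $n$, and summing the large-sieve estimates over these $O_n(1)$ classes yields \eqref{eq:large}.

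The only step that is not purely formal is the identification of the generic Galois group in the first paragraph — specifically the geometric irreducibility of the generic fibre of $A \mapsto \chi_A$, which is what guarantees the generic group is all of $S_n$ rather than merely some transitive subgroup; once that is in hand the sieve is a black box, and the one thing to watch is that the precise hypotheses of the version of Cohen's theorem being cited accommodate coefficient polynomials $c_k$ of differing degrees in the $a_{ij}$ (if not, one falls back on the explicit large-sieve argument just sketched). I would also flag that \eqref{eq:large} is very far from Conjecture \ref{conj:M_n}: the large sieve is insensitive to the geometry of \emph{how} $\chi_A$ degenerates — for instance to having a rational eigenvalue — and capturing that structure is exactly the role of the Fourier-analytic and geometric-sieve methods used in the later sections.
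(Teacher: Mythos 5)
Your proposal is correct and follows essentially the same route as the paper: both apply the Cohen--Serre quantitative Hilbert irreducibility theorem (equivalently, the large sieve you sketch) to the family $\chi_A$ viewed as a polynomial in the $n^2$ entries, with the exceptional/inseparable locus contributing only $O(T^{n^2-1})$. The one step you labor over --- that the generic Galois group is all of $S_n$ --- the paper dispatches by specialization rather than by geometric irreducibility of the fibres: since $\Gamma_y \subseteq \Gamma \subseteq S_n$ for every good specialization and a companion matrix realizes a polynomial with group $S_n$, one gets $\Gamma = S_n$ immediately.
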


In Section \ref{sec:red1}, we use a sieve based on Fourier analysis, modeled after the work of a group including the first author \cite{6author}, to get a bound on the number of matrices $A$ such that $\chi_A$ is reducible with an irreducible factor of low degree:

\begin{thm}\label{thm:red1}
  The number $R_{n,k}(T)$ of $n\times n$ matrices $A$ with $\norm{A} \leq T$ and characteristic polynomial $\chi_A$ having an irreducible factor of degree $k \leq n/2$ is bounded by
  \[
    R_{n,k}(T) \ll T^{n^2 - \frac{n-k}{n+k-1}}.
  \]
\end{thm}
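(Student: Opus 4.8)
The plan is to count matrices $A$ with $\norm{A} \le T$ whose characteristic polynomial $\chi_A$ has a monic irreducible factor $g$ of degree $k$; equivalently, $A$ has a $k$-dimensional rational invariant subspace $V \subset \QQ^n$ on which the action has irreducible characteristic polynomial $g$. First I would stratify by the invariant subspace: up to $\GL_n(\ZZ)$-change of basis, we may take $V = \ZZ^k \times 0$, so that $A$ is block upper-triangular, $A = \left(\begin{smallmatrix} B & C \\ 0 & D \end{smallmatrix}\right)$ with $B \in \Mat_{k}(\ZZ)$, $D \in \Mat_{n-k}(\ZZ)$, $C \in \Mat_{k \times (n-k)}(\ZZ)$, and $\chi_B = g$ is the irreducible factor. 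The number of distinct primitive sublattices $V$ that can arise for a fixed $A$ is $O(1)$ (bounded in terms of $n$), so up to constants it suffices to count, for each choice of $V$, the block-triangular matrices with $\chi_B$ irreducible of degree $k$; the off-diagonal block $C$ contributes a free factor $T^{k(n-k)}$, and $D$ contributes a free factor $T^{(n-k)^2}$, while the real content is the count of $k \times k$ integer matrices $B$ with $\norm{B} \le T$ and $\chi_B$ irreducible of degree $k$ — but crucially, the coordinates of $V$ need not be bounded by $T$, so $B$'s entries a priori can be as large as a power of $T$. This is where the Fourier-analytic geometric sieve of \cite{6author} enters.

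Next I would set up the Fourier/geometric sieve on the space of pairs, following \cite{6author}. The key input is that the characteristic polynomial map $\Mat_k(\ZZ) \to \ZZ^k$, $B \mapsto (\text{coeffs of } \chi_B)$, is a polynomial map whose fibers are $\GL_k$-conjugacy-stable, and the condition ``$\chi_B$ has a given factorization type'' cuts out a thin set. Concretely, one applies a smooth partition and Poisson summation to count lattice points $B$ with $\chi_B$ irreducible: the main term comes from the density of such $B$ in a box, and the error terms are controlled by exponential sums $\sum_B \psi(B/X) e(\langle \xi, B\rangle / q)$ over progressions, which decay because the variety $\{\chi_B \text{ reducible}\}$ has codimension $\ge 1$ and the characteristic-polynomial fibration is equidimensional. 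The upshot I expect is a bound of the shape: the number of $A$ with $\norm{A}\le T$ and an invariant $k$-space on which $A$ acts with irreducible char.\ poly is $\ll T^{k(n-k) + (n-k)^2} \cdot N_k(T)$ where $N_k(T)$ is an effective count of the relevant $k\times k$ blocks; optimizing the size of the block entries (they range over a box of side $\approx T^{1+\epsilon}$ before the sieve savings kick in, or one balances the volume gain against the sieve loss) produces the exponent $n^2 - \frac{n-k}{n+k-1}$. The denominator $n+k-1$ should emerge as the number of free parameters in the ``worst'' sub-configuration — essentially $k^2$ (entries of $B$) traded against the $k(n-k)$ entries of $C$ and the codimension-one saving, so that $\frac{n-k}{n+k-1}$ is the solution of a linear optimization $\min(\text{cost of one extra dimension of }B)$ versus $(\text{saving from codimension 1})$.

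The main obstacle I anticipate is exactly controlling the contribution of matrices $A$ for which the invariant subspace $V$ is \emph{not} representable by a lattice with small ($\le T$) coordinates — i.e., bounding the count of the block $B$ when its entries are forced to be much larger than $T$. Here one cannot simply count $B$ in a box of side $T$; instead the relevant observation is that the entries of $B$ are governed by the entries of $A$ together with the choice of primitive sublattice $V$, and $V$ itself is constrained because $A$ preserves it — so $V$ lies in a bounded-index family determined by the reduction of $A$ modulo small moduli. Making this quantitative is precisely the role of the geometric sieve: one shows that if $B$ is ``large'' then $\chi_B$ (hence $\chi_A$) is forced to be divisible by high powers of primes in a way that happens with controlled frequency, a direct analogue of Bhargava's argument that polynomials with large square divisors of the discriminant are rare. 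I would therefore spend most of the effort (i) proving the $O(1)$ bound on the number of invariant $k$-spaces with a careful reduction theory argument, and (ii) verifying the equidistribution/cancellation hypotheses of the \cite{6author} sieve for the characteristic-polynomial map on $\Mat_k$ — in particular that the singular series converges and the relevant exponential sums over $B \bmod q$ exhibit square-root cancellation away from a codimension-$\ge 1$ bad locus — so that the optimization yields the stated exponent with room to spare.
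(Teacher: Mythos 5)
There is a genuine gap. Your first move --- conjugating by $\GL_n(\ZZ)$ to put the invariant subspace $V$ in standard position and then counting block upper-triangular matrices --- destroys the hypothesis $\norm{A}\le T$, since the change-of-basis matrix can have entries that are a large power of $T$ (the covolume of the invariant lattice is only bounded by $T^{O(1)}$, cf.\ \eqref{eq:Lambda_bounded_T_O1}). You acknowledge this, but the repair you sketch is not the right phenomenon: the coefficients of the degree-$k$ factor $g$ are automatically $\ll T^i$ by root bounds, and there is no mechanism by which a large block $B$ forces $\chi_B$ to be divisible by high prime powers; Bhargava's ``large square divisors of the discriminant'' argument belongs to the primitive-field case (Section \ref{sec:geosieve}), not here. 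Stratifying by rational invariant subspaces is precisely the route the paper rejects at the outset of Section \ref{sec:red1} (``error terms swamp the main term''); it is only carried out in Theorem \ref{thm:red_short}, and there only under the strong extra hypothesis that $\Lambda^\perp$ is generated by vectors of length $\le T$.

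The actual proof never stratifies over $\QQ$ at all. It fixes one large prime $p>T$ and builds a mod-$p$ selector $\Psi_g$ that detects whether $A$ preserves \emph{some} $k$-dimensional $\FF_p$-subspace $\bar\Lambda$ (summed over all $\asymp p^{k(n-k)}$ of them) acting by a conjugate of the companion matrix of $\bar g$. Poisson summation then gives a main term $\asymp p^{-k}$ per $g$; the dual selector is supported on matrices $B$ of rank $r\le k$ over $\FF_p$, and the rank-$r$ contribution is controlled by counting integer matrices of rank $r$ and norm $\ll p/T$. Balancing these terms forces the choice $p\asymp T^{(2n-1)/(n+k-1)}$, and the final exponent comes from summing over only $O(Tp^{k-1})$ residual polynomials $g$ modulo $p$ (the trace coefficient of $g$ is genuinely $O(T)$, the others are free mod $p$). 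None of these steps --- the single-prime selector, the rank stratification of the Fourier support, the optimization of $p$, or the restricted count of residual $g$ --- appears in your outline, and they are exactly where the exponent $\frac{n-k}{n+k-1}$ comes from; asserting that the denominator ``should emerge'' from a linear optimization is not a substitute for exhibiting it.
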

This improves on the large sieve when $k \leq n/3$. For $k \gg \sqrt{n}$, it also improves on the bound $R_{n,k}(T) \ll T^{n^2 - n + k(k+1)/2}\log T$ due to Ostafe and Shparlinski \cite[equation (3.2)]{OSNonDiag}. When $k$ is fixed and $n$ is large, we obtain a savings of $1 - O(1/n)$ over the trivial bound.

In Section \ref{sec:red_short}, we use a combinatorial argument to get the full upper bound $T^{n^2 - n + 1} \log T$ but for a rather restricted class of matrices $A$, namely those which preserve a subspace generated by vectors of low height:

\begin{thm} \label{thm:red_short}
  For integers $1 \leq k \leq n/2$, let $S_{n,k}(T) \leq R_{n,k}(T)$ be the number of $n\times n$ matrices $A$ with $\norm{A} \leq T$ satisfying the following properties:
  \begin{enumerate}[(a)]
    \item\label{short:lat} $A$ preserves a lattice $\Lambda \subset \ZZ^n$ of dimension $k$;
    \item\label{short:irr} The characteristic polynomial of $A|_{\Lambda}$ is irreducible;
    \item\label{short:short} The orthogonally complementary lattice $\Lambda^\perp$ is generated by vectors of length $\leq T$.
  \end{enumerate}
  Then
  \[
    S_{n,k}(T) \ll T^{n^2 - n + 1}\log T.
  \]
  
\end{thm}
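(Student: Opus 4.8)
The plan is to sum over the invariant lattice. Since replacing $\Lambda$ by its saturation in $\ZZ^n$ affects neither condition (a) nor the characteristic polynomial of $A|_\Lambda$ (the rational span is unchanged) nor the complement $\Lambda^\perp$, we may assume $\Lambda$ is saturated. Discarding condition (b), which only enlarges the count, it suffices to bound
\[
  \sum_{\Lambda} N(\Lambda), \qquad N(\Lambda) := \#\bigl\{A \in \Mat_n(\ZZ) : \norm{A} \leq T,\ A\Lambda \subseteq \Lambda\bigr\},
\]
where $\Lambda$ runs over the saturated rank-$k$ sublattices of $\ZZ^n$ whose orthogonal complement $\Lambda^\perp$ is generated by vectors of length $\leq T$.

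First I would record two lattices. Writing the condition $A\Lambda\subseteq\Lambda$ as $W^\top A V = 0$, where the columns of $V$ (resp.\ $W$) form a $\ZZ$-basis of $\Lambda$ (resp.\ $\Lambda^\perp$), the set $L_\Lambda := \{A\in\Mat_n(\ZZ):A\Lambda\subseteq\Lambda\}$ is the kernel of $\operatorname{vec}(A)\mapsto (V^\top\otimes W^\top)\operatorname{vec}(A)$, hence a lattice of rank $n^2-k(n-k)$; from $\det\bigl((V^\top\otimes W^\top)(V\otimes W)\bigr)=\det(V^\top V)^{\,n-k}\det(W^\top W)^{\,k}=\operatorname{covol}(\Lambda)^{2n}$ (using the classical identity $\operatorname{covol}(\Lambda)=\operatorname{covol}(\Lambda^\perp)$), together with the fact that $V^\top\otimes W^\top$ is onto $\ZZ^{k(n-k)}$ because saturation forces the rows of $V$ and of $W$ to generate $\ZZ^k$ and $\ZZ^{n-k}$, one gets $\operatorname{covol}(L_\Lambda)=\operatorname{covol}(\Lambda)^n$. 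Its sublattice $\Lambda_0:=\{A:A\Lambda=0\}$ consists of the matrices all of whose rows lie in $\Lambda^\perp$; it has rank $n(n-k)$ and covolume $\operatorname{covol}(\Lambda^\perp)^n=\operatorname{covol}(\Lambda)^n$, and $L_\Lambda$ is the disjoint union of the cosets of $\Lambda_0$ indexed by the restriction $C:=A|_\Lambda\in\Mat_k(\ZZ)$.

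Here is where condition (c) enters: it says exactly that every successive minimum of $\Lambda^\perp$, hence of $\Lambda_0\cong(\Lambda^\perp)^{\oplus n}$, is $\ll_n T$. By the standard bound $\#(L\cap[-T,T]^N)\ll_N\prod_i(1+T/\lambda_i(L))$ and Minkowski's second theorem, every coset of $\Lambda_0$ therefore meets $[-T,T]^{n^2}$ in $\ll_n T^{n(n-k)}/\operatorname{covol}(\Lambda)^n$ points, so $N(\Lambda)\ll_n\nu(\Lambda)\,T^{n(n-k)}/\operatorname{covol}(\Lambda)^n$, where $\nu(\Lambda)$ is the number of $C$ occurring as $A|_\Lambda$ with $\norm A\leq T$. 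Condition (c) also gives $\operatorname{covol}(\Lambda)=\operatorname{covol}(\Lambda^\perp)\ll_n T^{n-k}$; combined with the Schmidt-type estimate $\#\{\Lambda:\operatorname{covol}(\Lambda)\asymp c\}\ll_n c^n$ for rational points of the Grassmannian of bounded height, the $\Lambda$-sum of $\operatorname{covol}(\Lambda)^{-n}$ collapses to a harmonic sum over dyadic $c\ll_n T^{n-k}$, which is what will produce the $\log T$. When $k=1$ this already concludes: condition (b) is vacuous, $C$ is an integer eigenvalue $\lambda$ with $|\lambda|\ll_n T$, so $\nu(\Lambda)\ll_n T$, and as $\Lambda=\ZZ v$ with $v$ primitive and $\#\{v:\norm v_2\asymp R\}\asymp R^n$,
\[
  S_{n,1}(T)\ll_n T^{n(n-1)}\!\!\sum_{\substack{v\ \mathrm{primitive}\\ \norm{v}_2\ll_n T^{\,n-1}}}\!\!\frac{T}{\norm{v}_2^{\,n}}\ \ll_n\ T^{n^2-n+1}\log T ,
\]
which moreover matches the lower-bound construction.

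The case $k\geq2$ is where the real work — and condition (b) — lies, and is the step I expect to be the main obstacle. Irreducibility of $f:=\chi_{A|_\Lambda}$ makes $\Lambda$ a torsion-free rank-one module over the order $\ZZ[x]/f$, and the roots of $f$, being eigenvalues of $A$, have absolute value $\ll_n T$, so its coefficients are $\ll_n T,T^2,\dots,T^k$ in successive degrees and there are only $\ll_n T^{k(k+1)/2}$ possibilities for $f$. Organising the $\Lambda$-sum by $f$ — so that $\Lambda\otimes\QQ\cong K_f:=\QQ[x]/f$ with $A$ acting by a root of $f$, and $\Lambda$ is pinned down by a fractional $\ZZ[x]/f$-ideal and an embedding $K_f\hookrightarrow\Mat_n(\QQ)$ — one aims to bound $\nu(\Lambda)$ and establish $\sum_\Lambda\nu(\Lambda)\,\operatorname{covol}(\Lambda)^{-n}\ll_n T^{nk-n+1}\log T$, whence $\sum_\Lambda N(\Lambda)\ll_n T^{n(n-k)}\cdot T^{nk-n+1}\log T=T^{n^2-n+1}\log T$. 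The delicate point is to make this estimate uniform in $f$: one must control, independently of the regulator and class number of $\ZZ[x]/f$, the number of $C$ with $\chi_C=f$ that arise as the restriction of a norm-$\leq T$ matrix to a lattice of a prescribed covolume. (One should also check that no $\Lambda$ is seriously overcounted when it admits several such invariant sublattices, but this only costs constants.)
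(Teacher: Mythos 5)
Your framework coincides with the paper's: reduce to saturated $\Lambda$, split the count into (i) the number of possible restrictions $C = A|_\Lambda$ and (ii) the number of $A$ with a given restriction (a coset count for the lattice $\Lambda^\perp \tensor \ZZ^n$, where condition (c) guarantees all successive minima are $\ll T$ so each coset meets the box in $\ll T^{n(n-k)}/d(\Lambda)^n$ points), and then sum $d(\Lambda)^{-n}$ over $\Lambda$ of polynomially bounded covolume to harvest the $\log T$. Your $k=1$ case is complete and correct. But for $k \geq 2$ there is a genuine gap, and it sits exactly where you say the ``real work'' lies: you never establish the bound $\nu(\Lambda) \ll T^{kn-n+1}$ on the number of restrictions, uniformly in $\Lambda$. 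The naive guess $\nu(\Lambda)\ll T^{k^2}$ is false (the paper exhibits a rank-$3$ lattice in $\ZZ^6$ with $\asymp T^{10}$ restrictions), so this is not a routine step, and your proposed route through rank-one $\ZZ[x]/f$-modules runs into precisely the uniformity-in-$f$ (regulator, class number) issues you flag without resolving.

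The paper's argument for this step is elementary and avoids the algebraic-number-theoretic route entirely: take a reduced basis $v_1,\dots,v_k$ of $\Lambda$ with lengths $\ell_1\leq\cdots\leq\ell_k$ and show the matrix entries of the restriction satisfy $|g_{ij}| \ll T\ell_j/\ell_i$. Entries forced to vanish by this bound (``pinch points'') are then analyzed combinatorially: condition (b) enters by ruling out a pinch point at $(i+1,i)$ --- such a pinch would make $G$ block upper triangular and $\chi_{A|_\Lambda}$ reducible --- which yields $\ell_{i+1}\ll T\ell_i$ and, combined with a second count (the restriction is determined by $kn - |P|$ entries of $A$), gives $\#G \ll T^{kn-n+1}$ in all cases. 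If you want to complete your write-up, you need either this pinch-point argument or a genuinely uniform substitute for it; as it stands the $k\geq 2$ case is a conditional sketch, not a proof.
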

Observe that conditions \ref{short:lat} and \ref{short:irr} force $\chi_A$ to have an irreducible factor $\chi_{A|_\Lambda}$ of degree $k$. Also, $\Lambda^\perp$ is an invariant lattice of dimension $n - k$ for the transpose $A^\top$, with $\chi_{A^\top|_{\Lambda^\perp}} \cdot \chi_{A|_\Lambda} = \chi_A$.

Finally, in Section \ref{sec:geosieve}, we use a geometric sieve modeled after Bhargava's proof of Theorem \ref{thm:vdW-Bh} to obtain a bound on the number of $A$ for which $\chi_A$ defines a primitive number field of large discriminant.  Our main new twist used in the proof is to take a double discriminant with respect to the direction of translation by a diagonal matrix with separable characteristic polynomial.  Recall that a field extension $K/\QQ$ is \emph{primitive} if it has no proper intermediate extensions $K \supsetneq L \supsetneq \QQ$; this is equivalent to the Galois group $G_A \subseteq S_n$ not lying in any of the wreath products $S_a\wr S_b \subset S_n$, where $a,b>1$ are integers with $ab = n$.

\begin{thm}\label{thm:geosieve}
  Let $L_n(T) \leq M_n(T)$ be the number of matrices $A$ with $\norm{A} \leq T$ such that $\chi_A$ is a defining polynomial for a primitive number field $K_A$ with discriminant $D \coloneqq \lvert\disc K_A \rvert \geq T^2$. Then
  \begin{equation} \label{eq:geosieve}
    L_n(T) \ll_\epsilon T^{n^2 - 1 + \epsilon}.
  \end{equation}
\end{thm}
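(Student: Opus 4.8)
The plan is to adapt Bhargava's geometric sieve argument for Theorem~\ref{thm:vdW-Bh}, using the extra flexibility coming from the $n^2$ matrix entries rather than the $n$ coefficients of a polynomial. Recall that in Bhargava's setup one studies the discriminant $\disc \chi_A$ as a polynomial in the entries $a_{ij}$, and localizes: a primitive field $K_A$ of discriminant $D \geq T^2$ must have $p^2 \mid \disc \chi_A$ for some prime $p$ with $p^2$ roughly of size $D$, hence $p \gg T$. The naive count of $A$ with $p^2 \mid \disc \chi_A$ for a single such $p$ is $\ll T^{n^2}/p^2 + (\text{error})$; summing over $p \gg T$ gives $\ll T^{n^2 - 1}$ from the main term, which is essentially what we want, but the error terms --- coming from the locus where $\disc \chi_A$ vanishes identically mod $p$ or is divisible by high powers --- are the crux.

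First I would set up the parametrization: think of $A$ as a point in the affine space $\Mat_n$, and consider the polynomial $F(A) = \disc \chi_A \in \ZZ[a_{ij}]$, which has degree $n(n-1)$ in the $a_{ij}$. The key geometric input needed is a bound on the dimension of the singular locus $\{A : F(A) = 0, \, \nabla F(A) = 0\}$, or more precisely a bound showing that $F$ is not ``too degenerate'' as a function of the entries. This is where the new twist advertised in the excerpt enters: rather than working with $F$ directly, one takes a \emph{double discriminant} in the direction of translation by a fixed separable diagonal matrix $\Delta = \Diag(d_1, \dots, d_n)$ with the $d_i$ distinct. That is, one considers $G(A) = \Disc_t\bigl(F(A + t\Delta)\bigr)$, the discriminant with respect to the auxiliary variable $t$ of the one-variable polynomial $t \mapsto F(A + t\Delta)$; because $\chi_{A + t\Delta}$ interpolates to $\chi_\Delta$ at the ``leading'' behavior, separability of $\chi_\Delta$ guarantees $G$ is not identically zero and lets one control its vanishing locus. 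One then runs the geometric sieve (Ekedahl/Bhargava--Poonen style, or the elementary version in \cite{Bhargava_vdW}) on $G$: the number of $A$ with $\norm{A} \leq T$ for which $p^2 \mid G(A)$ for \emph{some} prime $p > M$ is $\ll T^{n^2}/M + o(T^{n^2})$ provided $M$ grows; taking $M \asymp T$ yields the $T^{n^2 - 1}$ saving, with the $\epsilon$ absorbing the logarithmic and lower-order sieve losses.

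The bridge from $G$ back to $L_n(T)$ is the arithmetic step: if $K_A$ is primitive with $D = \lvert \disc K_A \rvert \geq T^2$, then since $\lvert \disc \chi_A \rvert \ll T^{n(n-1)}$ is polynomially bounded while $\disc \chi_A = D \cdot (\text{index})^2$, there must be a prime $p$ with $p^2 \mid \disc \chi_A$ and $p \gg D^{1/(n(n-1))} \cdot (\text{something})$ --- here I would use primitivity to rule out the ``$\chi_A$ has a repeated factor'' and ``small-index'' contributions, pushing $p$ above $T^{1 - \epsilon}$. One must check that the double-discriminant trick preserves this: $p^2 \mid \disc \chi_A$ should translate into $p^2 \mid G(A)$ after possibly enlarging the prime, or one argues directly on the original $F$ and only uses $\Delta$ to control the bad locus. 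The matrices not covered --- those with $\chi_A$ reducible, or $\lvert \disc K_A \rvert < T^2$, or $K_A$ imprimitive --- are explicitly excluded from $L_n(T)$, so no separate treatment is needed here.

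The main obstacle, I expect, is the geometric estimate: showing that the mod-$p$ reduction of $G$ (equivalently, controlling $\dim$ of the locus where $G \equiv 0$ or $\nabla G \equiv 0$) behaves well \emph{uniformly in $p$}, so that the tail of the sieve over large primes genuinely contributes $o(T^{n^2})$ rather than something of size $T^{n^2}$. In Bhargava's polynomial setting this rests on a clean factorization of $\disc f$ and an explicit description of its singularities; for characteristic polynomials of matrices the discriminant's singular structure is governed by matrices with repeated eigenvalues \emph{and} extra Jordan-block or commutant degeneracy, and the codimension bookkeeping is subtler. The role of the separable diagonal direction $\Delta$ is precisely to cut down transversally to this bad locus --- verifying that $t \mapsto F(A + t\Delta)$ is separable for $A$ outside a codimension-$\geq 2$ set (so that the geometric-sieve error is affordable) is the technical heart, and I would spend most of the effort there; the rest is a routine, if careful, application of the large-sieve/geometric-sieve machinery exactly as in \cite{Bhargava_vdW} and \cite{6author}.
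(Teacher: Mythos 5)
There is a genuine gap at the very first arithmetic step. You claim that $D \geq T^2$ forces a single prime $p \gg T$ (or $T^{1-\epsilon}$) with $p^2 \mid \disc \chi_A$, and that primitivity can be used to push the prime up. This is false: the field discriminant $D$ is squarefull (every $p \mid D$ satisfies $p^2 \mid D$), but a squarefull $D \geq T^2$ can perfectly well be a product of many small primes, e.g.\ $D = (p_1 \cdots p_k)^2$ with all $p_i \leq T^{\epsilon}$; primitivity of $K_A$ says nothing about the factorization of $D$. The paper (following Bhargava) therefore splits on the \emph{radical} $C = \prod_{p \mid D} p$. When $C \leq T$, one uses the index lemmas ($p^k \mid D$ forces $\ind(\chi_A \bmod p) \geq k$, and the fraction of polynomials of index $\geq k$ mod $p$ is $O(p^{-k})$) together with equidistribution of characteristic polynomials over $\FF_p$ to conclude that the $A$ with field discriminant $D$ form a set of density $O(1/D)$ mod $C$; summing $T^{n^2}/D$ over squarefull $D \geq T^2$ gives $T^{n^2-1}$. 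Your proposal has no mechanism covering this case.

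The second issue is the mechanism for the remaining case $C > T$. You propose an Ekedahl-type geometric sieve over a single prime $p > M \asymp T$, but again no single prime dividing $C$ need be large --- only the product is. The paper's actual argument is a divisor-bound argument exploiting the translation invariance of the double discriminant: $\DD(A) = \disc_t \disc_x \chi_{A+tS}(x)$ is constant on each of the $O(T^{n^2-1})$ congruence classes of $A$ modulo $\ZZ S$; outside the $O(T^{n^2-1})$ matrices where $\DD$ vanishes, $\DD(A)$ is a fixed nonzero integer of polynomial size, so the number of admissible $C > T$ dividing it is $\ll_\epsilon T^\epsilon$; and then the remaining parameter $t$ satisfies $\disc_x \det(xI - tS - A) \equiv 0 \bmod C$, a congruence that is a nonzero polynomial in $t$ modulo each $p \mid C$ (its leading term is $t^{n^2-n}\disc\chi_S$), hence has $O(1)^{\omega(C)} \ll_\epsilon T^\epsilon$ solutions. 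You correctly identify the double discriminant in the separable diagonal direction as the key object and correctly flag that its nonvanishing must be checked, but the sieve architecture surrounding it --- the $C \leq T$ versus $C > T$ dichotomy and the divisor-bound count in place of a single-large-prime sieve --- is missing, and without it the proof does not close.
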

Just as in \cite{Bhargava_vdW}, it is plausible that using Fourier analysis, one could remove the $\epsilon$ from the bound \eqref{eq:geosieve} and also extend the region of applicability to $D \geq T^{2 - \delta}$ for some small constant $\delta$. However, we do not attempt this strengthening, which is of doubtful value in the absence of a complementary result for small discriminants $D \leq T^{2 - \delta}$. In \cite{Bhargava_vdW}, the needed bound for small discriminants (Case II therein) is quickly proved by multiplying Schmidt's upper bound on the number of number fields of bounded discriminant \cite{Schmidt} by Lemke Oliver--Thorne's upper bound on the number of polynomials of bounded height cutting out a given field \cite[Proposition 2.2]{LOT20}. However, to apply such a technique in our setting would require a third factor, namely, the number $M_n(T; f)$ of matrices $A$ with a given characteristic polynomial $f$. For fixed irreducible $f$ and $T \to \infty$, Eskin--Mozes--Shah \cite{EMS96} show that
\begin{equation} \label{eq:EMS96}
  M_n(T; f) = (C_f + o_f(1)) T^{n(n-1)/2},
\end{equation}
but the constant $C_f$ and the rate of convergence depend heavily on $f$. The best uniform bound is due to Habegger, Ostafe, and Shparlinski \cite[Theorems 2.1--2.2]{ShpI}:
\[
  M_n(T; f) \ll_n T^{n^2 - n - 1 + o(1)}.
\]
Observe that this result can be applied to at most $T^{n+1}$ polynomials $f$ if we want to obtain nontrivial bounds. It is believed that \cite[Conjecture 1.1]{ShpI}
\[
  M_n(T; f) \ll_n T^{n(n-1)/2 + o(1)},
\]
but this conjecture is apparently hard. We expect that progress in the small discriminant case will require a sharper error term in \eqref{eq:EMS96} by careful analysis of the spectral and ergodic theory on the manifolds involved, inspired by the recently improved error terms  due to Blomer--Lutsko \cite{BlomerLutsko2024} for the related question of counting matrices of bounded norm in $\SL_n\ZZ$.

\textbf{Acknowledgments}: This research was supported by the NSF. We thank Alina Ostafe, Igor Shparlinski, Daniele Garzoni, and Ofir Gorodetsky for insightful comments and Robert J. Lemke Oliver for helpful conversations about this work.

\section{Integer eigenvalue and the lower bound of Conjecture \ref{conj:M_n}} \label{sec:Zeig}
In the van der Waerden--Bhargava setting, the main term was for polynomials with a rational root. Analogously, we conjecture that the dominant contribution to $M_n(T)$ comprises matrices with an integer eigenvalue. As pointed out by Rivin \cite[Corollary 11]{Rivin06pre}, the count of such matrices is a consequence of work of Katznelson \cite[Theorem 1]{Kat93}. We can easily get $T^{n^2 - n + 1}$ such matrices by setting all but the last entry in the last column to $0$, which makes $\begin{bmatrix} 0&\cdots&0&1\end{bmatrix}^\top$ an eigenvector. The true order of magnitude is only slightly larger:
\begin{thm}
    The number $R_{n,1}(T)$ of integer $n \times n$ matrices with entries bounded by $T$ and at least one integer eigenvalue is given asymptotically by
    \[
      R_{n,1}(T) \asymp T^{n^2 - n + 1} \log T.
    \]
\end{thm}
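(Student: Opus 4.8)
The plan is to deduce the theorem from Katznelson's count of singular integer matrices \cite{Kat93}, which states that the number of singular $n\times n$ integer matrices $B$ with $\norm{B}\le T$ is $\asymp_n T^{n^2-n}\log T$. I will also use the trivial observation that this estimate is stable under bounded translation: any singular integer matrix lying in a translate of $[-T,T]^{n^2}$ by a matrix of norm $\ll_n T$ lies in a centered cube of side $\ll_n T$, so the upper bound $\ll_n T^{n^2-n}\log T$ still applies. The link to our problem is that an integer $\lambda$ is an eigenvalue of $A$ precisely when $A-\lambda I$ is singular; thus I would stratify $R_{n,1}(T)$ by the choice of an integer eigenvalue $\lambda$ and apply Katznelson's bound to $B=A-\lambda I$.

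For the upper bound, if $\norm A\le T$ and $\lambda\in\ZZ$ is an eigenvalue with eigenvector $v\ne 0$, then $|\lambda|\,\norm{v}_\infty=\norm{Av}_\infty\le\bigl(\max_i\sum_j|A_{ij}|\bigr)\norm{v}_\infty\le nT\,\norm{v}_\infty$, so $|\lambda|\le nT$ and $\lambda$ runs over $O(T)$ values. For each such $\lambda$ the matrices being counted correspond bijectively, via $B=A-\lambda I$, to singular integer matrices in the translate of $[-T,T]^{n^2}$ by $-\lambda I$ (a matrix of norm $\le nT$), and there are $\ll_n T^{n^2-n}\log T$ of those. Summing over $\lambda$ gives $R_{n,1}(T)\ll_n T^{n^2-n+1}\log T$.

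For the lower bound, restrict to $\lambda\in\{1,\dots,\lfloor T/2\rfloor\}$ and to singular integer matrices $B$ with $\norm B\le T/2$, and set $A\coloneqq B+\lambda I$; then $\norm A\le T$ and $\lambda$ is an eigenvalue of $A$. By Katznelson's lower bound there are $\gg_n T^{n^2-n}\log T$ valid $B$ for each $\lambda$, so the number of pairs $(\lambda,B)$ is $\gg_n T^{n^2-n+1}\log T$. The map $(\lambda,B)\mapsto A=B+\lambda I$ is at most $n$-to-one, since distinct preimages of a given $A$ have distinct second coordinates $\lambda$, each an eigenvalue of $A$, of which there are at most $n$. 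Hence $R_{n,1}(T)\ge\tfrac1n\,\#\{(\lambda,B)\}\gg_n T^{n^2-n+1}\log T$, and together with the upper bound this gives $R_{n,1}(T)\asymp T^{n^2-n+1}\log T$.

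The one nontrivial input is Katznelson's theorem; the rest is the shift-and-sum bookkeeping and the harmless translation-uniformity of the singular-matrix count, so I do not expect a serious obstacle. For completeness I note an alternative, more self-contained route: count pairs $(v,A)$ with $v$ a primitive integer eigenvector of $A$, so that the eigenvector relation cuts out one hyperplane in each of the $n$ rows of $A$, contributing $\ll_n (T^{n-1}/V)^n$ matrices of norm $\le T$ per eigenvalue when $\norm{v}_\infty=V$; since there are $\asymp_n V^{n-1}$ primitive $v$ of a given sup-norm $V$, the admissible eigenvalues number $O(T)$, and $V\ll_n T^{n-1}$ (a primitive generator of $\ker(\lambda I-A)$ divides a column of $\mathrm{adj}(\lambda I-A)$, whose entries are $(n-1)\times(n-1)$ minors of size $\ll_n T^{n-1}$), one gets $\sum_{V\ll T^{n-1}}V^{n-1}\cdot T^{n^2-n+1}/V^n\asymp T^{n^2-n+1}\log T$; the matching lower bound along these lines is more delicate but can be obtained by restricting to $V\le T$. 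Reusing Katznelson is shorter, so that is the route I would take.
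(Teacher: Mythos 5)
Your proposal is correct and follows essentially the same route as the paper: apply Katznelson's count of singular integer matrices to $B = A - \lambda I$, sum over the $O(T)$ admissible integer eigenvalues $\lambda$ for the upper bound, and for the lower bound restrict to $\lambda \leq T/2$ and $\norm{B} \leq T/2$, noting that the map $(\lambda, B) \mapsto B + \lambda I$ is at most $n$-to-one. Your explicit handling of the translation-uniformity of Katznelson's bound (absorbing the shifted box into a larger centered box) is a point the paper passes over silently, but the argument is the same.
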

\begin{proof}
  Let $A$ be such a matrix with eigenvalue $\lambda$. Then $\size{\lambda} \leq nT$, and $B = A - \lambda I$ is singular. Counting singular matrices in an expanding box was done by Katznelson \cite[Theorem 1]{Kat93}, who showed that for any bounded convex open set $\B \subset \Mat_n(\RR)$ containing the origin, there is a constant $c_\B > 0$ for which
  \begin{equation}
    \Size{\{A \in T\B \intsec \Mat_n(\ZZ) : \det A = 0\}} = c_\B T^{n^2 - n} \log T + O(T^{n^2 - n}).
  \end{equation}
  In our case, we can simply take $\B$ to be the box $\{A \in \Mat_n(\RR) : \norm{A}_\infty \leq 1\}$ consisting of matrices whose entries are bounded in absolute value by $1$. Summing over $\lambda \ll T$, we get $R_{n,1}(T) \ll T^{n^2 - n + 1} \log T$, which is the desired upper bound.

  For the lower bound, note that if $\lambda \leq T/2$ and $\norm{B}_\infty \leq T/2$, then $\norm{A}_\infty = \norm{B + \lambda I} \leq T$. Moreover, each $A$ counted by $R_{n,1}(T)$ has at most $n$ integer eigenvalues and so is expressible in at most $n$ ways as $B + \lambda I$ with $B$ singular.
\end{proof}

\begin{rem}
  A different proof for the upper bound may be found in Shparlinski \cite[Theorem 6]{Shp09}.
\end{rem}

\section{The Cohen--Serre irreducibility theorem and Theorem \ref{thm:large}}
\label{sec:CS_HIT}

The Cohen--Serre quantitative Hilbert irreducibility theorem, proved in special cases by Cohen \cite{Coh79} and generalized by Serre \cite[\textsection 13.1; Theorem 1]{SerreMW} is a tool that applies quite generally to the Galois group of any family of polynomials whose coefficients are themselves polynomials in a family of variables. We follow the presentation of Zywina \cite[Theorem 1.2]{Zywina}, specializing to $k = \QQ$:
\begin{thm}[Cohen--Serre]
Let $F = F(x, Y_1,\ldots, Y_m) \in \QQ(Y_1,\ldots, Y_m)[x]$ be a separable polynomial of degree $n$ having Galois group $\Gamma \subseteq S_n$ over the rational function field $\QQ(Y_1,\ldots, Y_m)$. For $y = (y_1,\ldots, y_m) \in \QQ^m$, let $\Gamma_y \subseteq \Gamma$ be the Galois group of $F(x, y_1,\ldots, y_m)$ as a polynomial in $x$. Denote by $\Omega_F$ the subvariety of those $(y_1,\ldots, y_m)$ for which $\Gamma_t$ is undefined (because $f$ has a multiple root or one of its coefficients has a pole). Then as $T \to \infty$,
\[
  \Bigl|\{y \in \ZZ^m \setminus \Omega_F : \size{y_i} \leq T, \Gamma_y \neq \Gamma\}\Bigr| \ll_{m,F} T^{m-1/2} \log T.
\]
\end{thm}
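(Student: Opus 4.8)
This is exactly the classical large-sieve proof of quantitative Hilbert irreducibility due to Cohen \cite{Coh79} and Serre \cite{SerreMW}; I sketch its structure. Write $k(Y) = \QQ(Y_1,\ldots,Y_m)$, let $M$ be a splitting field of $F$ over $k(Y)$, so $\Gamma = \Gal\bigl(M/k(Y)\bigr)$, and note that for $y \notin \Omega_F$ the group $\Gamma_y$ is, up to conjugacy in $\Gamma$, the decomposition group of a place of $M$ over $y$. If $\Gamma_y \neq \Gamma$, then $\Gamma_y$ is conjugate into some maximal subgroup $H \leq \Gamma$, and since $\Gamma$ has only finitely many conjugacy classes of maximal subgroups it suffices to bound, for each such $H$, the number $N_H(T)$ of $y \in \ZZ^m \setminus \Omega_F$ with $\size{y_i} \leq T$ for which $\Gamma_y$ is conjugate into $H$. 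Fix $H$ and choose $\theta \in M$ with $k(Y)(\theta) = M^H$ and $\Stab_\Gamma(\theta) = H$; then $g_H(x,Y) \coloneqq \prod_{\sigma \in \Gamma/H}(x - \sigma\theta)$ lies in $k(Y)[x]$, is irreducible over $k(Y)$, and has degree $r = [\Gamma : H] \geq 2$ in $x$. After clearing denominators, there is a nonzero $P \in \QQ[Y_1,\ldots,Y_m]$, built from the leading coefficient and denominators of the coefficients of $g_H$ together with $\disc_x F$ and $\disc_x g_H$, such that for every $y \in \ZZ^m$ with $P(y) \neq 0$ the specialization $Y \mapsto y$ has good reduction: $\Gamma_y$ is identified with a decomposition subgroup $D \leq \Gamma$, and $g_H(x,y)$ factors over $\QQ$ according to the orbits of $D$ on $\Gamma/H$. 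In particular $g_H(x,y)$ has a root in $\QQ$ if and only if $D$ — equivalently $\Gamma_y$ — lies in a conjugate of $H$. Since $P \neq 0$, there are $O(T^{m-1})$ integer points with $\size{y_i}\leq T$ and $P(y) = 0$, so it remains to bound the size of
\[
  \mathcal A_H(T) \coloneqq \bigl\{y \in \ZZ^m : \size{y_i} \leq T,\ g_H(x,y)\text{ has a root in }\QQ\bigr\}.
\]

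Choose a positive integer $N$ so that $g_H$ has coefficients in $\ZZ[1/N][Y_1,\ldots,Y_m]$ with leading coefficient a unit there; then for $p \nmid N$ and $y \in \ZZ^m$, any rational root of $g_H(x,y)$ is $p$-integral and reduces to a root of $g_H(x,y) \bmod p$ in $\FF_p$. Hence, if $y \in \mathcal A_H(T)$, then for every $p \nmid N$ the residue $y \bmod p$ avoids the set $\Sigma_p \coloneqq \{c \in (\ZZ/p)^m : g_H(x,c)\text{ has no root in }\FF_p\}$, of size $\omega_H(p) \coloneqq \size{\Sigma_p}$. The decisive point is that $\omega_H(p) \gg p^m$ for $p$ in a set of primes of positive density, with the implied constant independent of $p$. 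Indeed, let $\tilde\Gamma$ be the Galois group of $g_H$ over $k(Y)$, a transitive subgroup of $S_r$ with $r \geq 2$; by Jordan's theorem it contains a fixed-point-free element $\tau$. For $c \in (\ZZ/p)^m$ off the branch locus, $g_H(x,c)$ has no root in $\FF_p$ exactly when the Frobenius conjugacy class of $c$ is fixed-point-free, and this class lies in the coset of the geometric monodromy group $\tilde\Gamma^{\mathrm{geo}} \trianglelefteq \tilde\Gamma$ determined by $\mathrm{Frob}_p$ on the constant field. The function-field Chebotarev density theorem with Lang--Weil error terms thus gives $\omega_H(p) = \delta(p)\,p^m + O(p^{m-1/2})$, where $\delta(p)$ is the proportion of fixed-point-free elements in that coset; for the positive-density set of $p$ for which the coset is $\tau\,\tilde\Gamma^{\mathrm{geo}}$ one has $\delta(p) \geq 1/\size{\tilde\Gamma^{\mathrm{geo}}}$, whence $\omega_H(p) \gg p^m$.

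Finally, apply the $m$-dimensional large-sieve inequality \cite{SerreMW} to $\mathcal A_H(T) \subseteq [-T,T]^m \cap \ZZ^m$, sieving by the primes $p \leq Q$ with $p \nmid N$, each of which omits $\gg p^m$ residue classes modulo $p$:
\[
  \size{\mathcal A_H(T)} \ll \frac{(T + Q^2)^m}{L(Q)}, \qquad L(Q) \geq \sum_{\substack{p \leq Q \\ p\,\nmid\,N}} \frac{\omega_H(p)}{p^m - \omega_H(p)} \gg \frac{Q}{\log Q}.
\]
Choosing $Q = \ceil{T^{1/2}}$ yields $\size{\mathcal A_H(T)} \ll T^{m-1/2}\log T$. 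Summing over the finitely many conjugacy classes of maximal subgroups $H$ and adding the $O(T^{m-1})$ points with $P(y) = 0$ gives the claimed bound, with an implied constant depending on $m$ and on $F$ (through $\Gamma$, the resolvents $g_H$, and the bad primes).

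The main obstacle is the resolvent-and-specialization step of the first paragraph: constructing $g_H$ and pinning down the nonzero polynomial $P$ — equivalently, the finitely many bad primes and parameters — precisely enough that the excluded locus is genuinely of codimension $\geq 1$, hence contributes only $O(T^{m-1})$, and that outside it the reduction-mod-$p$ dictionary between rational roots of $g_H(x,y)$ and roots in $\FF_p$ is valid. The companion ingredient $\omega_H(p) \gg p^m$ forces one to work with the geometric rather than merely the arithmetic monodromy group and to combine Jordan's theorem with effective equidistribution over $\FF_p$; it is the saving of a full $p^m$ (not just $p^{m-1}$) that makes $L(Q) \gg Q/\log Q$ rather than $L(Q) \gg \log\log Q$, and hence produces the power saving $T^{-1/2}$ rather than nothing.
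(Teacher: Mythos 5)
The paper does not prove this theorem at all: it is imported as a black box, quoted (following Zywina \cite{Zywina}) from Cohen \cite{Coh79} and Serre \cite{SerreMW}, and then applied to $F = \chi_A$. So there is no internal proof to compare with; what you have written is a reconstruction of the standard large-sieve proof from those sources, and it is correct in outline and follows their route. Your steps --- reduction to the finitely many conjugacy classes of maximal subgroups $H \leq \Gamma$; the resolvent $g_H(x,Y) = \prod_{\sigma \in \Gamma/H}(x-\sigma\theta)$ with $\Stab_\Gamma(\theta) = H$; the good-specialization dictionary valid off a hypersurface $P = 0$, whose integer points in the box contribute only $O_F(T^{m-1})$; the lower bound $\omega_H(p) \gg p^m$ for a positive-density set of primes, obtained from Jordan's theorem combined with function-field Chebotarev/Lang--Weil applied to the correct coset of the geometric monodromy group; and the $m$-dimensional large sieve with $Q \asymp T^{1/2}$, giving $\size{\mathcal{A}_H(T)} \ll T^{m-1/2}\log T$ --- are exactly the ingredients of the cited proofs, and the two points you single out as delicate (constructing $P$ so the reduction dictionary holds outside a codimension-one locus, and working with the geometric rather than arithmetic monodromy so that a fixed-point-free class is genuinely hit by Frobenius for a positive proportion of $p$) are indeed where the substance lies; your handling of both is the standard and correct one. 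The only caveat is the usual one for a sketch: the existence and nonvanishing of $P$, the $O(T^{m-1})$ bound for its zero locus, and the exact form of the multidimensional large-sieve inequality are asserted rather than verified, but each is routine and none hides an obstruction.
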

In our situation, we take $Y_1,\ldots, Y_m = Y_{n^2}$ to be the entries of the matrix $A$ and $F = \chi_A$. We have $\Gamma = S_n$ since any integer polynomial can be realized as a characteristic polynomial. The exceptional set $\Omega_F$ comprises matrices with a repeated eigenvalue, of which there are at most $O(T^{n^2 - 1})$ of norm up to $T$. Hence we obtain Theorem \ref{thm:large}.

\begin{rem}
For some subgroups $\Gamma \subsetneq S_n$, the number of matrices $A$ with Galois group $\Gamma$ has a better bound $T^{n^2 - 1 + \delta(\Gamma)} \log T$, where $0 < \delta(\Gamma) < 1$ is determined combinatorially from $\Gamma$; see Zywina \cite[Theorem 1.4]{Zywina}. Unfortunately, it does not seem possible to tighten the exponent below $n^2 - 1$ with these methods.
\end{rem}

\section{Reducible matrices and Theorem \ref{thm:red1}}\label{sec:red1}
If the characteristic polynomial $\chi_A(x)$ is reducible but does not have an integer root, it has a factor $g(x)$ of degree $k$, $2 \leq k \leq n/2$. In this section we estimate the number $R_{n,k}(T)$ of matrices $A$ having this kind of decomposition.

\begin{proof}[Proof of Theorem \ref{thm:red1}]
Straightforward adaptation of Katznelson's methods leads to difficulties in which the error terms swamp the main term. Instead, we use a Fourier-analytic method inspired by recent successes in counting problems of this type \cite{6author,Bhargava_vdW_short,ABO2024}. Let $p > T$ be a large prime to be specified more precisely later. We sieve out $A$ based on the fact that, if $\chi_A(x)$ has an irreducible factor $g(x)$ of degree $k$, the same is true modulo $p$, and this factor $g$ must still have the form
\[
  g(x) = \sum_i a_i x^{k-i}, \quad a_i \ll T^i.
\]
Let $v$ be an eigenvector of $A$ whose eigenvalue $\lambda$ is a root of $g$. Then the subspace of $\CC^n$ spanned by the $k$ conjugates of $v$ is defined over $\QQ$, so we have found a subspace $W \subset \QQ^n$ on which the characteristic polynomial of $A|_W$ is $g$. Let $\Lambda = W \intsec \ZZ^n$ be the corresponding primitive lattice. Then the image $\ba\Lambda$ of $\Lambda$ in $\FF_p^n$ is a lattice such that $A|_{\ba\Lambda}$ has characteristic polynomial $\ba g = g \bmod p$. If $p \nmid \disc g$ (as holds for all but at most $O(1)$ primes $p > T$), then $\ba g$ has no repeated roots, there is a unique linear operator on $\FF_p^k$ up to conjugacy that has characteristic polynomial $\ba g$, namely the companion matrix $C_{\ba g}$. For each particular choice of conjugate $C : \ba \Lambda \to \ba \Lambda$ of the companion matrix, let $\Psi_C : \Mat_n(\FF_p^n) \to \ZZ$ be the selector that sends a matrix $A : \FF_p^n \to \FF_p^n$ to $1$ if $A|_{\ba\Lambda} = C$, and $0$ otherwise. Let $\Psi_g$ be the sum of the $\Psi_C$, over all distinct $\Lambda$ and $C$. Then for $A$ with $g \mid \chi_A$, we have shown that
\[
  \Psi_g(A) \geq 1.
\]
Let $\Phi : \Mat_n(\RR) \to \CC$ be a Schwartz bump function that is identically $1$ on matrices of norm at most $1$. Then the count $R_{n,g}$ of such matrices is at most the left-hand side of a Poisson summation:
\[
  R_{n,g} \leq \sum_{A \in \Mat_{n}(\ZZ)} \Phi\left(\frac{A}{T}\right) \Psi_g(A) = T^{n^2} \sum_{B \in \Mat_{n}(\ZZ)} \widehat{\Phi}\left(\frac{TB}{p}\right)\widehat{\Psi}_g(B) 
\]
with conventions as in \cite[\textsection 5.1]{ABO2024}. We take it that the inner product of two matrices $A$, $B$ is $\tr(AB)$.

We first compute the main term $\widehat{\Phi}(0)\widehat{\Psi}_g(0)$. To compute $\widehat{\Psi}_g(0)$, we multiply:
\begin{itemize}
  \item There are $\binom{n}{k}_p \asymp p^{kn - k^2}$ subspaces $\bar \Lambda$.
  \item For each $\bar \Lambda$, the number of $C$ is $\asymp p^{k^2 - k}$, because we can conjugate $C$ by any of the $\asymp p^{k^2}$ matrices in $\GL_k(\FF_p)$, but the centralizer of $C$ has order $\Big\lvert\big(\FF_p[X]/g(X)\big)^\cross\Big\rvert \asymp p^k$.
  \item For fixed $C$, $\Psi_C$ is supported on an $n(n-k)$-dimensional affine subspace of $\Mat_n(\FF_p)$.
\end{itemize}
Hence
\begin{align*}
  \widehat\Psi_C(0) &= p^{n(n-k) - n^2} = p^{-kn} \\
  \widehat\Psi(0) &\asymp p^{kn - k^2} \cdot p^{k^2 - k} \cdot p^{-kn} = p^{-k} \\
  \widehat\Phi(0) \widehat\Psi(0) &\asymp p^{-k}.
\end{align*}
(Note that the probability that $g \mid \chi_A$ is $p^{-k}\big(1 + O(1/p)\big)$, because the characteristic polynomials of matrices over a finite field are equidistributed modulo $p$ up to $O(1/p)$ as $p \to \infty$ for fixed $n$ by Proposition \ref{prop:char_poly_eqdst}. Thus, the loss due to those $A$ for which $\Psi_g > 1$ is of low order.)

For the error terms $\widehat{\Phi}_g(B/p)\widehat{\Psi}_g(B)$, observe that $\Phi_C$ is the selector for a coset of
\[
  \bar{\Lambda}^\bot \tensor \FF_p^n = \<u^\top v : u \in \bar{\Lambda}^\bot, v \in \FF_p^n\>.
\]
Hence the support of $\widehat{\Psi}_C$ is the orthogonal complement
\[
  \FF_p^n \tensor \bar{\Lambda}.
\]
In particular, $\widehat{\Psi}_g(B) = 0$ unless the rank $r$ of $B \in \Mat_n(\FF_p)$ satisfies $r \leq k$. The case $r = 0$ is the main term. For fixed $B$ and $r$, observe that there are $\binom{n - r}{k - r}_p \asymp p^{kn - k^2 - rn + rk}$ subspaces $\bar \Lambda$ containing the column space of $B$. The remaining factors are unchanged from the main term, so
\[
  \widehat\Phi(TB/p) \widehat\Psi(B) \ll p^{-k - rn + rk}.
\]
To sum these terms, we must estimate the number of matrices $B$ having each rank $r$ modulo $p$. We may assume that $\norm{B} \ll p/T$, as $\widehat{\Phi}(BT/p)$ drops precipitously to zero outside this range. %
Note that $B$ is determined by $2rn - r^2$ coefficients: an $r\times r$ submatrix, nonsingular modulo $p$, and all entries in those $r$ rows and $r$ columns. Hence the number of $B$ is
\[
  \ll \left(\frac{p}{T}\right)^{2rn - r^2} \ll \left(\frac{p}{T}\right)^{2rn - r},
\]
since $p \gg T$, for a total
\begin{align*}
  R_{n,g}(T) &\ll \sum_{r = 0}^k T^{n^2 - 2rn + r} p^{rn + rk - r - k}. 
\end{align*}
At this point, we must take $p \ll T^2$ or else the $r = 1$ term will start to outpace the main term. We take $p \asymp T^{\frac{2n-1}{n+k-1}}$ to equalize all the terms and get
\[
  R_{n,g}(T) \ll T^{n^2} p^{-k}.
\]
Lastly, we sum over $g$. Note that modulo $p$, the trace-coefficient $a_1$ of $g$ is bounded by $O(T)$, whereas the other coefficients admit no bound better than $p$, so we sum over $O(T p^{k-1})$ polynomials $g$ over $\FF_p$ (and also over $O(1)$ primes $p \asymp T^{\frac{2n-1}{n+k-1}}$, to ensure at least one prime not dividing $\disc g$ for each $g$ over $\ZZ$) to bound the desired quantity
\begin{align*}
  R_{n,k}(T) &\ll T p^{k-1} \cdot T^{n^2} p^{-k} \\
  &= T^{n^2 + 1} p^{-1} \\
  &\ll T^{n^2 + 1 - \frac{2n-1}{n+k-1}}\\
  &= T^{n^2 - \frac{n-k}{n+k-1}}. \qedhere
\end{align*}
\end{proof}

\section{Reducible matrices fixing a lattice of short vectors and Theorem \ref{thm:red_short}}\label{sec:red_short}

If $\chi_A$ has an irreducible factor $g$ of degree $k$, we can find an invariant subspace $W \subseteq \QQ^n$ on which $A$ acts with characteristic polynomial $g$. One way to do this is to find a vector $v$ in the null space of $g(A)$, which has rank $\leq n - k$ and norm $\ll T^k$, and consider $W = \<v, Av, \ldots, A^{k-1}v\>$. This method in particular yields a bound on the volume of the primitive lattice $\Lambda = W \intsec \ZZ^n$: Since $\norm{g(A)} \ll T^k$, it has a kernel vector $v$ of length at most $T^{k(n-k)}$, and then $\norm{A^i v} \ll T^i \norm{v} \ll T^{k(n-k) + i}$. In particular, 
\begin{equation}
    d(\Lambda) \ll T^{k^2(n-k) + \tbinom{k}{2}}. \label{eq:Lambda_bounded_T_O1}
\end{equation}

Each lattice $\Lambda$ defines a subspace $\Inv_\Lambda \RR \subset \Mat_n \RR$ of linear maps on $\RR^n$ that fix $\Lambda$. Hence one approach to bounding $R_n(T)$ is to count lattice points of norm $\ll T$ in the integer lattice $\Inv_\Lambda \ZZ = \Inv_\Lambda \RR \intsec \Mat_n \ZZ$ and then sum over $\Lambda$. For instance, if $\Lambda = \<e_1,\ldots, e_k\>$ is the primitive lattice in a coordinate hyperplane, then
\[
  \Inv_\Lambda \ZZ = \left\{\left[
  \begin{array}{c|c}
      * & * \\ \hline
      0 & *
  \end{array}
  \right]: * \in \ZZ \right\}
\]
is the set of matrices with an $(n-k) \times k$ lower left block of zeros, and thus
\[
  R_\Lambda(T) \coloneqq \size{\Inv_\Lambda \ZZ \intsec T\B} \asymp T^{n^2 - k(n-k)},
\]
which is evidently the maximum-order contribution to $R_n(T)$ among lattices $\Lambda$, since we always have $\dim \Inv_\Lambda \ZZ = n^2 - k(n-k)$.

Let $v_1, \ldots, v_k$ be a reduced basis for $\Lambda$, and let $\ell_i = \norm{v_i}$. By convention, a reduced basis is arranged in increasing order of length: $\ell_1 \leq \cdots \leq \ell_k$. Let $G = [g_{ij}]_{i,j = 1}^k$ be the matrix describing the action of $A|_\Lambda$ with respect to the basis $\{v_i\}$. Our approach to estimating $R_\Lambda(T)$ is by two steps: we count $G$, that is, $A|_\Lambda$; then we count $A$ given $A|_\Lambda$. 
One might hope that there are $\ll T^{k^2}$-many $G$ and $\ll T^{n(n-k)}$-many $A$ for each $G$, with equality when $\Lambda$ spans a coordinate hyperplane. The bound on the number of $A$ given $G$ indeed holds, but there may be more $G$ for some lattices:

\begin{examp} \label{ex:n6k3}
  Let $\Lambda$ be the primitive lattice in $n = 6$ dimensions spanned by the $k = 3$ vectors
  \[
    \begin{bmatrix}
      1 \\ 0 \\ 0 \\ 0 \\ 0 \\ 0
    \end{bmatrix},\quad
    \begin{bmatrix}
      0 \\ 1 \\ T \\ 0 \\ 0 \\ 0
    \end{bmatrix},\quad
    \begin{bmatrix}
      0 \\ 0 \\ 0 \\ 1 \\ T \\ T^2
    \end{bmatrix}.
  \]
  Observe that any matrix $A$ of the shape
  \[
    A = \left[
      \begin{array}{c|cc|ccc}
          a & b & c & d & e & f \\ \hline
            & g &   & h & i &   \\
            &   & g &   & h & i \\ \hline
            &   &   & j &   &   \\
            &   &   &   & j &   \\
            &   &   &   &   & j
      \end{array}
    \right]
  \]
  fixes $\Lambda$. Moreover, each choice of integers $a, b, \ldots, j$ in the range $[0, T)$ yields a different choice of the corresponding restriction
  \[
    G = \begin{bmatrix}
        a & b + Tc & d + Te + T^2 f \\
          & g      & h + Ti         \\
          &        & j
    \end{bmatrix}.
  \]
  So there are at least $T^{10}$ (in fact, $\asymp T^{10}$) matrices $G$, more than the $\asymp T^{k^2} = T^9$ that appear for $\Lambda = \<e_1,e_2,e_3\>$.
\end{examp}
Generalizing this example appropriately, we find that there is a lattice $\Lambda$ for $n = \binom{k + 1}{2}$ with $\gg T^{\binom{k + 2}{3}}$ choices for $G$. The exponent $\binom{k + 2}{3}$ is of higher order than $k^2$, and indeed is smaller only by a constant factor than the bound $\#G \ll T^{k n}$, which arises by noting that $G$ is determined by the $k$ rows of $A$ describing the projection of outputs of $A$ onto any coordinate $k$-plane whose projection to $W$ is faithful.

\subsection{Counting \texorpdfstring{$G$}{G}}
Our first lemma limits the possibilities for $G$.
\begin{lem}
  The matrix $G$ as above has entries
  \begin{equation} \label{eq:bound_g}
    g_{ij} \ll \frac{T \ell_j}{\ell_i}.
  \end{equation}
\end{lem}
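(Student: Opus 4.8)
The plan is to combine the size bound on $A$ with standard facts about reduced bases from the geometry of numbers. Writing the action of $A|_\Lambda$ on the reduced basis in the form $A v_j = \sum_{i=1}^{k} g_{ij} v_i$, the hypothesis $\norm{A} \le T$ gives (for the $\infty$-norm) $\norm{A v_j} \le nT\norm{v_j} = nT\ell_j$. Thus \eqref{eq:bound_g} will follow once we show that for any $w$ in the span $W = \Lambda \tensor \RR$, written as $w = \sum_i c_i v_i$, one has $\size{c_i} \ll \norm{w}/\ell_i$ with the implied constant depending only on $k \le n$; we then apply this with $w = A v_j$, whose $v_i$-coefficient is precisely $g_{ij}$.

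To prove the coefficient bound I would invoke the Gram--Schmidt orthogonalization $v_1^\ast, \dots, v_k^\ast$ of the basis. The relevant property of a reduced basis (for concreteness, Minkowski-reduced) is the two-sided comparison $\norm{v_i^\ast} \asymp \ell_i$ with constants depending only on $k$: the inequality $\norm{v_i^\ast} \le \norm{v_i} = \ell_i$ is automatic, and the reverse $\norm{v_i^\ast} \gg \ell_i$ follows from the standard fact that $\prod_i \ell_i \asymp d(\Lambda) = \prod_i \norm{v_i^\ast}$ for a reduced basis, since then $\norm{v_i^\ast} = d(\Lambda) / \prod_{j \ne i}\norm{v_j^\ast} \ge d(\Lambda) / \prod_{j\ne i}\ell_j \gg \ell_i$. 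Given this, the argument is a downward induction on $k$: because $v_j - v_j^\ast \in \< v_1, \dots, v_{j-1}\>$ for each $j$, the coefficient of $v_k^\ast$ in $w = \sum_i c_i v_i$ is exactly $c_k$, so projecting $w$ onto the line $\RR v_k^\ast$ gives $\size{c_k}\,\norm{v_k^\ast} \le \norm{w}$ and hence $\size{c_k} \ll \norm{w}/\ell_k$. Then $w - c_k v_k = \sum_{i<k} c_i v_i$ lies in the span of the reduced basis $v_1, \dots, v_{k-1}$ of a lower-dimensional lattice and has norm $\le \norm{w} + \size{c_k}\ell_k \ll \norm{w}$, so the inductive hypothesis yields $\size{c_i} \ll \norm{w}/\ell_i$ for all $i < k$ as well.

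I expect the only real obstacle to be bookkeeping: choosing a convenient notion of reduction and quoting the comparison $\norm{v_i^\ast} \asymp \ell_i$ (equivalently $\prod_i \ell_i \asymp d(\Lambda)$) cleanly, and checking that all implied constants depend on $n$ alone, which holds since $k \le n/2$. The operator-norm estimate, the one-dimensional projection step, and the induction itself are entirely routine.
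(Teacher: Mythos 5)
Your proof is correct and follows essentially the same route as the paper: expand $Av_j = \sum_i g_{ij} v_i$ in the reduced basis, bound $\norm{Av_j} \ll T\ell_j$, and invoke the reduced-basis property that a coefficient $c_i$ of $w = \sum_i c_i v_i$ satisfies $\size{c_i}\,\ell_i \ll \norm{w}$. The only difference is that the paper simply cites this last property as standard, whereas you supply a proof of it via Gram--Schmidt and $\norm{v_i^\ast} \asymp \ell_i$, which is a correct (and slightly more self-contained) way to justify the same step.
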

\begin{proof}
  A reduced basis $v_1,\ldots,v_k$ for a lattice $\Lambda$ has the useful property that, for any decomposition of a vector $v = a_1 v_1 + \cdots + a_k v_k$, we have
  \[
    \norm{a_i v_i} \ll \norm{v},
  \]
  where the implied constant depends only on $k$. Thus, we can write
  \begin{align*}
    \size{g_{ij}} &= \frac{\norm{g_{ij} v_i}}{\norm{v_i}} \\
    &\ll \frac{\Norm{\sum_{h} g_{hj} v_h}}{\norm{v_i}} \\
    &= \frac{\Norm{A v_j}}{\Norm{v_i}} \\
    &\ll \frac{\Norm{A} \Norm{v_j}}{\Norm{v_i}} \\
    &\ll \frac{T \ell_j}{\ell_i},
  \end{align*}
  proving the assertion.
\end{proof}
\begin{lem}
  The number of possible $G$, for a given $\Lambda$, is $\ll T^{kn - n + 1}$.
\end{lem}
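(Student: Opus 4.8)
The plan is to count $G$ in two stages: first its characteristic polynomial, then $G$ within the resulting conjugacy class.

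\emph{Stage 1: the characteristic polynomial.} Write $g(x) = \chi_G(x)$. This is a monic degree-$k$ divisor of $\chi_A$, and it is irreducible, since $G$ represents $A|_\Lambda = A|_W$ and the factor $g$ of $\chi_A$ was chosen irreducible. Its coefficient of $x^{k-m}$ is $\pm$ the sum of the $m\times m$ principal minors of $G$; by the entry bound $g_{ij}\ll T\ell_j/\ell_i$ of the previous lemma, in each term $\prod_{i\in S}g_{i\sigma(i)}$ of the permutation expansion of such a minor (with $\sigma$ a permutation of a set $S$ of size $m$) the factors $\ell_{\sigma(i)}/\ell_i$ multiply to $1$, so the term is $\ll T^m$ and the coefficient is $\ll_m T^m$. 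Hence $g$ ranges over $\ll T^{1+2+\cdots+k}=T^{\binom{k+1}{2}}$ monic polynomials.

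\emph{Stage 2: $G$ given $g$.} For a fixed irreducible $g$, I would bound the number of $G\in\Mat_k(\ZZ)$ with $\chi_G=g$ and $|g_{ij}|\le CT\ell_j/\ell_i$, where $C$ is the implied constant above. Conjugating by $D=\Diag(\ell_1,\dots,\ell_k)$ replaces $G$ by $\widetilde G=DGD^{-1}$, whose entries satisfy $|\widetilde G_{ij}| = |g_{ij}|\,\ell_i/\ell_j \le CT$, while $\widetilde G$ runs through the lattice $L=D\Mat_k(\ZZ)D^{-1}\subset\Mat_k(\RR)$, which is unimodular since $\prod_{i,j}(\ell_i/\ell_j)=1$. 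Thus one is counting points of a unimodular lattice in the cube $\|\cdot\|_\infty\le CT$ that lie on the conjugacy orbit $V_g=\{M:\chi_M=g\}$, a smooth affine variety of dimension $k^2-k$ (its stabilizer $(\RR[x]/g)^\times$ has dimension $k$). This is the same type of count, with a unimodular lattice in place of $\Mat_k(\ZZ)$, as that underlying the work of Eskin--Mozes--Shah and Habegger--Ostafe--Shparlinski; standard lattice-point estimates for a variety of this dimension in a box of side $T$ give $\ll_\epsilon T^{k^2-k+\epsilon}$. Irreducibility of $A|_\Lambda$ enters again here: since no $\langle v_1,\dots,v_m\rangle$ is $A$-stable for $m<k$, some $Av_i$ with $i\le m$ is independent of $v_1,\dots,v_m$ and has norm $\ll T\ell_m$, so $\ell_{m+1}\ll T\ell_m$ and the box $\{|g_{ij}|\le CT\ell_j/\ell_i\}$ has all aspect ratios $\ll T^{O(1)}$.

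\emph{Combining.} Multiplying the two stages, $\#G\ll_\epsilon T^{\binom{k+1}{2}+k^2-k+\epsilon}=T^{(3k^2-k)/2+\epsilon}$. Since $1\le k\le n/2$ we have $\tfrac{3k^2-k}{2}\le 2k^2-2k+1\le (k-1)n+1=kn-n+1$, the first inequality being $(k-1)(k-2)\ge0$ and the second $n\ge 2k$; both are strict when $k\ge3$, so the $\epsilon$ is harmless there. The remaining cases $k\le 2$ are immediate from the entry bound alone: for $k=1$ one has $\#G\ll T$, and for $k=2$ the bounds $|g_{ii}|\ll T$, $|g_{12}|\,|g_{21}|\ll T^2$ (together with $\ell_2\ll T\ell_1$) give $\#G\ll T^4$, and $T,T^4\ll T^{n+1}=T^{kn-n+1}$. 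This yields $\#G\ll T^{kn-n+1}$.

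\emph{Main obstacle.} The delicate step is Stage 2: one needs the bound $\ll_\epsilon T^{k^2-k+\epsilon}$ for integer matrices of bounded height with a fixed irreducible characteristic polynomial to hold uniformly in $g$ \emph{and} in the shape of the skewed box (equivalently, in the unimodular lattice $L$, which depends on $\Lambda$). The sharp uniform bounds in the literature are stated for the standard lattice and the cube $\|M\|_\infty\le T$, so transferring them to $L$ costs a little care; but the arithmetic above has slack — we use only the exponent $k^2-k$, well above the conjectured $k(k-1)/2$ — so even a crude uniform version is enough.
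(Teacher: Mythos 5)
Your Stage 1 (counting characteristic polynomials via principal minors, with the telescoping product $\prod\ell_{\sigma(i)}/\ell_i=1$) is correct, and your final arithmetic combining the stages checks out. The problem is Stage 2, which you correctly identify as the delicate step but then dismiss too quickly. There is no ``standard lattice-point estimate'' giving $\ll_\epsilon T^{k^2-k+\epsilon}$ points of an \emph{arbitrary unimodular lattice} in a box of side $T$ on a $(k^2-k)$-dimensional affine variety. Unimodularity controls only the product of the successive minima of $L=D\Mat_k(\ZZ)D^{-1}$; its shortest vector has length $\ell_1/\ell_k$, which can be as small as $T^{-O(k)}$, so $L$ can contain $\gg T^{1+O(k)}$ points on a single line inside the cube. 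For the variety $V_g$ in the \emph{standard} lattice, the paper itself records that the best known bound uniform in $g$ is Habegger--Ostafe--Shparlinski's $T^{k^2-k-1+o(1)}$ and calls the conjectural $T^{k(k-1)/2+o(1)}$ ``apparently hard''; no version of these results is uniform over boxes with aspect ratios up to $T^{O(1)}$, and embedding your skewed box in a cube of side $T\cdot\ell_k/\ell_1\ll T^{O(k)}$ and applying HOS at that height gives an exponent far above $k^2-k$. The regime where your Stage 2 must do real work is exactly the regime of extreme skewness (many ratios $\ell_i/\ell_j\gg T$), which is where the trivial box count $\prod_{i,j}(1+T\ell_j/\ell_i)$ exceeds the target $T^{kn-n+1}$ --- so the gap sits precisely at the hard part of the lemma.

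For comparison, the paper's proof avoids conjugacy classes entirely and is purely combinatorial. It introduces \emph{pinch points}, the pairs $(i,j)$ with $cT\ell_j/\ell_i<1$ (forcing $g_{ij}=0$), and runs a dichotomy: if there are at least $n-1$ pinch points, then $G$ is determined by at most $kn-(n-1)$ entries of $A$, each $\ll T$; if there are at most $n-2$, it bounds the product $T^{k^2}\prod_{(i,j)\in P}\ell_i/(T\ell_j)$ using the inequality $\ell_{i+1}\ll T\ell_i$, which follows from irreducibility of $\chi_{A|_\Lambda}$ exactly as in your own observation that no $\langle v_1,\dots,v_m\rangle$ is $A$-stable. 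You rediscovered that key irreducibility input but deployed it only to bound aspect ratios; the paper uses it to control the pinch-point product directly, with no appeal to counting on varieties. If you want to rescue your route, you would need to prove a box-shape-uniform and $g$-uniform bound of strength $T^{k^2-k+o(1)}$ for $\{G\in L:\chi_G=g\}$, which is a substantial open-ended task rather than a citation.
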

\begin{proof}
  Let $c$ be the implied constant in \eqref{eq:bound_g}, so that $|g_{ij}| \leq c T \ell_j / \ell_i$. Note that if $c T \ell_i / \ell_j < 1$, then $g_{ij} = 0$; in this case, we say that $(i,j)$ is a \emph{pinch point}. For instance, in Example \ref{ex:n6k3}, there is a pinch point at $(3,1)$. Let $P \subseteq [n]^2$ be the set of pinch points. If $(i,j)$ is not a pinch point, the number of possibilities for $g_{ij}$ is $\ll T \ell_j / \ell_i$. Multiplying,
  \begin{align}
      \# G &\ll \prod_{(i,j) \notin P} \frac{T \ell_j}{\ell_i} \nonumber \\
      &= \prod_{i,j = 1}^k \frac{T \ell_j}{\ell_i} \prod_{(i,j) \in P} \frac{\ell_i}{T \ell_j} \nonumber \\
      &= T^{k^2} \prod_{(i,j) \in P} \frac{\ell_i}{T \ell_j}. \label{eq:pinch_prod}
  \end{align}
  
  Since $\Lambda$ is $k$-dimensional, the projection of $\Lambda$ onto some $k$-dimensional coordinate subspace, WLOG the first $k$ coordinates, is full-dimensional. Then the first $k$ rows of $A$ determine $G$. Moreover, each pinch point imposes a linear condition on the entries in the first $k$ rows of $A$. Hence $G$ is determined by $kn - \size{P}$ entries of $A$, each of which is $\ll T$, so we get the bound
  \begin{equation} \label{eq:G_first_way}
    \# G \ll T^{kn - \size{P}}.
  \end{equation}
  If $\size{P} \geq n - 1$, then \eqref{eq:G_first_way} proves the lemma. Hence we may assume that $\size{P} \leq n - 2$.

  Note that if $(i,j)$ is a pinch point, $i' \geq i$, and $j' \leq j$, then $(i',j')$ is also a pinch point. In other words, the pinch points form a lower left corner of the $k \times k$ array. Clearly there are no pinch points $(i,j)$ with $i \leq j$. Also, $(i + 1, i)$ is never a pinch point: if it is, it forces the entire lower left $i \times (k - i)$ submatrix of $G$ to be zero, so that $G$ is block upper triangular, contradicting our hypothesis that its characteristic polynomial $g$ is irreducible! So we have the bound
  \begin{equation} \label{eq:bound_l}
    \ell_{i+1} \ll T \ell_i.
  \end{equation}
  Let row $i$ have $p_i$ pinch points $(i,1),\ldots,(i,p_i)$. For $(i,j) \in P$, each factor in \eqref{eq:pinch_prod} is bounded by
  \[
    \frac{\ell_i}{T \ell_j} \ll T^{p_i + 1 - j} \frac{\ell_i}{T \ell_{p_i + 1}} \ll T^{p_i + 1 - j},
  \]
  yielding a bound $\#G \ll T^r$ where
  \begin{align*}
    r &= k^2 + \sum_{i = 3}^k \sum_{j = 1}^{p_i} (p_i + 1 - j) \\
    &= k^2 + \sum_{i = 3}^k \frac{p_i(p_i + 1)}{2} \\
    &\leq k^2 + \sum_{i = 3}^k \frac{p_i(k - 1)}{2} \\
    &= k^2 + \frac{k - 1}{2} \sum_{i = 3}^k p_i \\
    &= k^2 + \frac{k - 1}{2} |P| \\
    &\leq k^2 + \frac{k - 1}{2} (n-2) \\
    &\leq k^2 + \frac{k - 1}{2} (2n - 2k - 2) \\
    &= nk - n + 1. \qedhere
  \end{align*}
\end{proof}

\subsection{Counting \texorpdfstring{$A$}{A} given \texorpdfstring{$G$}{G}}

\begin{proof}[Proof of Theorem \ref{thm:red_short}]
If $\Lambda$ and $G$ are fixed, then $A$ is determined up to translation by the lattice $\Lambda^\perp \tensor \ZZ^n$ of matrices each of whose rows annihilates $\Lambda$. If $\Lambda^\perp$ is generated by vectors of length $\leq T$, we get
\begin{align*}
  \#A &\ll \#G \cdot \frac{T^{n(n - k)}}{d(\Lambda^\perp \tensor \ZZ^n)} \\
  &= \#G \cdot \frac{T^{n(n-k)}}{d(\Lambda)^n} \\
  &\ll \frac{T^{n^2 - n + 1}}{d(\Lambda)^n}.
\end{align*}
The result follows by summing over $\Lambda$ with $d(\Lambda) \ll T^{O(1)}$ (by \eqref{eq:Lambda_bounded_T_O1}) and using the bound $\sum_{d(\Lambda) \ll T^{O(1)}} d(\Lambda)^{-n} = O(\log T)$, which Katznelson deduces from a lattice-counting estimate due to Schmidt \cite[equation (11)]{Kat93}.
\end{proof}

Note that the restriction on the length of the vectors (bounded by $T$) is closely related to the difficult ``large value problem" \cite{Guth} that appears in multiple fields of mathematics and computer science.  Essentially, this problem asks: for a given matrix $A$ and bounded input vector $v$, how many entries of $Av$ can be large?  The lack of control of large entries of $\Lambda^\perp$ leads to our restriction on the length of its generating vectors, which then leads to the overall savings from counting these matrices.  It is possible that developments in large value problems could be useful in similar matrix counting problems.
\section{Primitive groups with large field discriminant and Theorem \ref{thm:geosieve}} \label{sec:geosieve}

In this section, we bound the number $L_n(T)$ of matrices such that $\chi_a$ defines a primitive number field $K_A$ of discriminant $D = \disc K_A \geq T^2$. Our methods are direct adaptations of Bhargava's.

Let $C = \prod_{p\mid D} p$. First, we note that $D$ is squarefull.

\begin{lem}[\cite{Bhargava_vdW}, remark following Proposition 11]
  If a prime $p$ divides $C$, then $p^2 \mid D$.
\end{lem}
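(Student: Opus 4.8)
\textit{Proof proposal.} The plan is to prove the statement one prime at a time, via its contrapositive: if a prime $p$ divides $D = \size{\disc K_A}$ to the first power only (so $v_p(D) = 1$), then $\Gal(\chi_A) = S_n$, contradicting the fact that the matrices of interest are also counted by $M_n(T)$. This is the one spot where we use primitivity of $K_A$ for more than its definition, so it is worth isolating.

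First I would unwind the hypothesis $v_p(D) = 1$ by ramification theory. Using the relation between the discriminant and the different, $v_p(\disc K_A) = \sum_{\mathfrak p \mid p} f_{\mathfrak p}\, d_{\mathfrak p}$, where $d_{\mathfrak p}$ is the exponent of $\mathfrak p$ in the different, $d_{\mathfrak p} \geq e_{\mathfrak p} - 1$ with equality exactly when $\mathfrak p$ is tamely ramified, and $d_{\mathfrak p} \geq e_{\mathfrak p}$ whenever $p \mid e_{\mathfrak p}$. Since each summand is a nonnegative integer, $v_p(D) = 1$ forces a unique ramified prime $\mathfrak p_0 \mid p$, with $f_{\mathfrak p_0} = d_{\mathfrak p_0} = 1$, hence $e_{\mathfrak p_0} = 2$ and $p$ odd (if $p = 2$ then $e_{\mathfrak p_0} = 2$ is wild, giving $d_{\mathfrak p_0} \geq 2$, so in fact $v_2(D) \neq 1$ outright); every other prime over $p$ is unramified with $e = f = 1$. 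Equivalently, $K_A \tensor \QQ_p \isom \QQ_p^{\,n-2} \times F$ with $F/\QQ_p$ a tamely ramified quadratic extension.

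Next I would pass to the Galois closure $\widetilde{K}_A/\QQ$, with group $G = \Gal(\chi_A) \subseteq S_n$, and fix a prime $\mathfrak P$ above $\mathfrak p_0$. The inertia group $I_{\mathfrak P} \leq G$ fixes each of the $n - 2$ embeddings of $K_A$ factoring through $\QQ_p$ and surjects onto $\Gal(F/\QQ_p) \isom \ZZ/2$ on the remaining two; an element of $S_n$ fixing $n - 2$ points is the identity or the transposition of the other two, so $I_{\mathfrak P}$ is exactly that transposition. Thus $G$ contains a transposition. On the other hand, $\chi_A$ is irreducible, so $G$ is transitive, and $K_A$ is primitive, so $G$ is a primitive subgroup of $S_n$ (it lies in no $S_a \wr S_b$, as recalled in the introduction). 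By Jordan's classical theorem, a primitive permutation group of degree $n$ containing a transposition is $S_n$; this contradicts $\Gal(\chi_A) \neq S_n$, so no such $p$ exists and the lemma follows.

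The argument is essentially routine ramification theory; the only points needing care are the wild-ramification inequality $d_{\mathfrak p} \geq e_{\mathfrak p}$ (which both disposes of $p = 2$ and pins down the tame quadratic shape at odd $p$) and the verification that inertia acts as a single transposition, for which the clean route is the local factorization of $K_A \tensor \QQ_p$ above. Invoking Jordan's theorem then closes the gap, and no estimation is needed for this lemma.
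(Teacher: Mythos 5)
Your proof is correct and is essentially the argument the paper is importing wholesale from the remark in \cite{Bhargava_vdW} that it cites in lieu of a proof: $v_p(D)=1$ forces a single tamely ramified prime with $e=2$, so inertia in the Galois closure is generated by a transposition, and Jordan's theorem together with primitivity of $K_A$ (i.e.\ primitivity of $G_A$ as a permutation group) forces $G_A = S_n$, contradicting $L_n(T) \leq M_n(T)$. One small imprecision: $v_p(D)=1$ does not give $K_A \tensor \QQ_p \isom \QQ_p^{\,n-2} \times F$, only $F$ times an \emph{unramified} \'etale algebra of degree $n-2$ (the residue degrees at the unramified primes need not be $1$); but inertia still fixes the corresponding $n-2$ embeddings pointwise, so the identification of $I_{\mathfrak P}$ with a single transposition, and hence the whole argument, goes through unchanged.
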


Recall \cite[Proposition 21]{Bhargava_vdW} that if $f \in \FF_p[x]$ is a polynomial of degree $n$, we factor $f = \prod_i f_i^{e_i}$ into powers of distinct irreducibles and define
the \emph{index} of $f$ to be
\[
  \ind f = \sum_i (e_i - 1) \deg f_i = n - \sum_i \deg f_i.
\]
The index has the following useful properties:
\begin{lem}[see \cite{ABO2024}, Lemma 5.3]
  If $p^k \mid D$, then the index of $\chi_A$ mod $p$ is at least $k$.
\end{lem}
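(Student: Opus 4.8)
The plan is to read the factorization type of $\chi_A \bmod p$ off the splitting of $p$ in $\OO_{K_A}$ and compare it with the contribution of $p$ to $\disc K_A$ through the different. First I would fix a root $\alpha$ of $\chi_A$, set $K = K_A = \QQ(\alpha)$, and factor $\chi_A = \prod_{i=1}^{r} h_i$ over $\ZZ_p$ into monic irreducibles $h_i \in \ZZ_p[x]$; these factors correspond bijectively to the primes $\pp_1,\dots,\pp_r$ of $\OO_K$ above $p$, with $\deg h_i = e_i f_i$ where $e_i = e(\pp_i/p)$, $f_i = f(\pp_i/p)$, and $\sum_i e_i f_i = n$.

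The key step is to show that each reduction $\ba{h}_i \coloneqq h_i \bmod p$ is a power $\ba{g}_i^{m_i}$ of a single irreducible $\ba{g}_i \in \FF_p[x]$ with $\deg \ba{g}_i \leq f_i$. Indeed, the roots of $h_i$ in $\ba{\QQ}_p$ form one $\Gal(\ba{\QQ}_p/\QQ_p)$-orbit; being integral, they reduce into the residue field $\FF_{p^{f_i}}$ of $K_{\pp_i}$, and their reductions form one $\Gal(\ba{\FF}_p/\FF_p)$-orbit, i.e.\ the root set of a single irreducible $\ba{g}_i$ of degree dividing $f_i$. This uses nothing about whether $p \mid [\OO_K : \ZZ[\alpha]]$ — if $p$ does divide the index, the reduction is merely more degenerate, which only helps. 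Collecting factors, $\chi_A \bmod p = \prod_i \ba{g}_i^{m_i}$, so the distinct monic irreducible factors of $\chi_A \bmod p$ have total degree at most $\sum_i \deg \ba{g}_i \leq \sum_i f_i$, hence
\[
  \ind(\chi_A \bmod p) \;\geq\; n - \sum_{i=1}^{r} f_i \;=\; \sum_{i=1}^{r} f_i(e_i - 1).
\]
Finally I would invoke Dedekind's formula $v_p(\disc K) = \sum_i f_i \, d(\pp_i/p)$, where $d(\pp_i/p)$ is the exponent of $\pp_i$ in the different, together with the tameness identity $d(\pp_i/p) = e_i - 1$ whenever $p \nmid e_i$, to get $v_p(\disc K) = \sum_i f_i(e_i-1) \leq \ind(\chi_A \bmod p)$; since $p^k \mid D$ forces $k \leq v_p(\disc K)$, this yields $\ind(\chi_A \bmod p) \geq k$.

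I expect the structural claim about $\ba{h}_i$ — that $\chi_A \bmod p$ is never ``better'' than the decomposition of $p$ predicts — to be the only point needing real argument; everything else is the standard dictionary among $\chi_A \bmod p$, the splitting of $p$, and the different. The one genuine limitation is wild ramification: if $p \mid e_i$ then $d(\pp_i/p) > e_i - 1$ and the last equality fails in the unfavorable direction, so the estimate is clean — and is all that the geometric sieve of Section~\ref{sec:geosieve} needs — for $p > n$, where ramification above $p$ is automatically tame; the contribution of the finitely many primes $p \leq n$ would be treated separately.
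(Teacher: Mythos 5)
The paper offers no proof of this lemma, only a citation to \cite{ABO2024}, so there is no internal argument to compare against; but your local computation is the standard route and is correct as far as it goes. The structural claim you isolate is right: factoring $\chi_A = \prod_i h_i$ over $\ZZ_p$, the roots of each $h_i$ form one $\Gal(\ba\QQ_p/\QQ_p)$-orbit of integral elements, reduction is equivariant for the surjection onto $\Gal(\ba\FF_p/\FF_p)$, so $\ba h_i$ is a power of a single irreducible of degree dividing $f_i$, whence $\ind(\chi_A \bmod p) \ge n - \sum_i f_i = \sum_i f_i(e_i - 1)$, independently of whether $p$ divides $[\OO_{K_A} : \ZZ[\alpha]]$. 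Combined with $v_p(\disc K_A) = \sum_i f_i\, d(\pp_i/p)$ and $d(\pp_i/p) = e_i - 1$ in the tame case, this gives the lemma for all $p$ with $p \nmid e(\pp_i/p)$, in particular for all $p > n$.

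Be aware, though, that the caveat you relegate to a closing remark is not a limitation of your method but of the statement itself: the lemma as written is \emph{false} at wildly ramified primes. Take $A$ the companion matrix of $x^2 + 2$, so $K_A = \QQ(\sqrt{-2})$ and $D = 8$; then $2^3 \mid D$ (indeed already $2^2 \mid D$), yet $\chi_A \equiv x^2 \pmod 2$ has index $1$. Likewise $x^3 - 3$ gives $v_3(D) = 5$ against index $2$. So the tameness hypothesis (say $p > n$) must be built into the lemma, not treated as a deferred case; your proof is the correct one precisely because it makes this restriction visible. The application in Section \ref{sec:geosieve} is unharmed: replacing $C = \prod_{p \mid D} p$ by $\prod_{p \mid D,\ p > n} p$ changes $C$ by a bounded factor, and the later steps of that argument already restrict to $p \ge n$. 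The one cosmetic improvement I would suggest is to note that $\deg \ba g_i$ in fact \emph{divides} $f_i$ (you only use $\le$, which suffices), and that distinct $h_i$ may reduce to powers of the same $\ba g_i$, which only increases the index — you do account for this with your ``at most,'' but it is worth saying explicitly.
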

\begin{lem}[see \cite{Bhargava_vdW}, Proposition 21]
  \label{lem:index_rare}
  The fraction of polynomials modulo $p$ having index at least $k$ is $O(p^{-k})$.
\end{lem}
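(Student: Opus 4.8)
The plan is to count, for each fixed value of the index $j \geq k$, the number of monic polynomials $f \in \FF_p[x]$ of degree $n$ with $\ind f = j$, and show the total is $O(p^{n-k})$; dividing by $p^n$ gives the stated bound. The natural device is the factorization type. Recall that $\ind f = n - \sum_i \deg f_i$ where the sum runs over the \emph{distinct} monic irreducible factors $f_i$ of $f$. So a polynomial of index $j$ is built from a multiset of irreducibles whose underlying \emph{set} has total degree $n - j$. I would therefore stratify by the squarefree "radical" part: first choose the set of distinct irreducible divisors, of total degree $d := n - j \leq n - k$, in at most $O(p^{d})$ ways (the number of monic squarefree polynomials of degree $d$ is $\leq p^d$, and each corresponds to its factorization); then count the ways to assign multiplicities $e_i \geq 1$ to these irreducibles so that $\sum e_i \deg f_i = n$. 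The key point is that this latter count is bounded by a constant depending only on $n$ (indeed it is at most the number of partitions of $n$, once one also chooses how to group the fixed set of irreducibles — a finite combinatorial quantity with no $p$-dependence). Hence the number of $f$ with index exactly $j$ is $O(p^{n-j}) = O(p^{n-k})$ for $j \geq k$, and summing the geometric-type series over $j$ from $k$ to $n$ changes only the implied constant.

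More carefully, here is the cleanest route I would write up. Fix a partition $\mu = (d_1 \geq d_2 \geq \cdots \geq d_s)$ of the integer $n - j$ recording the degrees of the distinct irreducible factors, and fix a composition $e_1, \dots, e_s \geq 1$ of exponents subject to $\sum_i e_i d_i = n$. There are only $O_n(1)$ such pairs $(\mu, (e_i))$. For each fixed pair, the number of $f$ realizing it is at most the number of ways to choose $s$ distinct monic irreducibles of the prescribed degrees, which is at most $\prod_i (\text{number of monic irreducibles of degree } d_i) \leq \prod_i p^{d_i} = p^{n-j}$. Multiplying by the $O_n(1)$ choices of $(\mu, (e_i))$ and summing $j$ from $k$ to $n$ yields at most $O_n(p^{n-k})$ polynomials of index $\geq k$, and dividing by the total count $p^n$ of monic degree-$n$ polynomials gives the fraction $O(p^{-k})$.

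The computation is entirely routine; the only thing requiring a moment's care — and hence the "main obstacle," such as it is — is making sure the combinatorial bookkeeping separates cleanly into a $p$-dependent piece (the choice of which irreducibles, contributing $p^{n-j}$) and a $p$-independent piece (the choice of degree-and-exponent profile, contributing $O_n(1)$), and that the $p$-independent piece really is bounded uniformly in $p$. This is immediate from the observation that once the degrees $d_i$ and exponents $e_i$ are fixed, only finitely many profiles are compatible with $\deg f = n$, regardless of $p$. An alternative, even shorter, argument: a polynomial of index $\geq k$ is divisible by $h^2$ for some monic $h$ of degree $\geq \lceil k/2 \rceil \cdot (\text{not quite}) \dots$ — this is slightly lossy, so I would prefer the factorization-type count above, which gives the sharp exponent $p^{-k}$ directly. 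Either way the proof is short, and I would present the factorization-type version since it matches the index definition most transparently.
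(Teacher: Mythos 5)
Your argument is correct; the paper itself gives no proof of this lemma, simply citing Bhargava's Proposition 21, and your factorization-type count (stratifying by the degree-and-exponent profile, with the $p$-dependence confined to the $\le p^{n-j}$ choices of distinct irreducibles and the profile count bounded by $O_n(1)$) is exactly the standard argument behind that citation. No gaps.
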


We need the approximate equidistribution of characteristic polynomials of matrices over $\FF_p$. Indeed, an exact formula for the number of matrices with a given characteristic polynomial is given in \cite{Reiner} and \cite{Gerstenhaber}:

\begin{prop}[Reiner \cite{Reiner}, Theorem 2, and Gerstenhaber \cite{Gerstenhaber}]
\label{prop:char_poly_eqdst}
  For any monic $f \in \FF_q[x]$ of degree $n$ over a finite field $\FF_q$, the number of matrices $A \in \Mat_n(\FF_q)$ having characteristic polynomial $f$ is
  \[
    q^{n^2 - n}\bigg(1 + O\Big(\frac{1}{q}\Big)\bigg).
  \]
  (Here the implied constant depends on $n$, but not $q$ or $f$.)
\end{prop}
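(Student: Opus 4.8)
The plan is to count matrices $A \in \Mat_n(\FF_q)$ with $\chi_A = f$ by stratifying according to the module structure of $\FF_q^n$ over the ring $\FF_q[x]$, where $x$ acts as $A$. Fixing $\chi_A = f$ pins down the annihilator-level data only up to the constraint that the characteristic polynomial is $f$; to get an exact count one classifies the possible $\FF_q[x]$-module structures. Concretely, write $f = \prod_i f_i^{e_i}$ with the $f_i$ distinct monic irreducibles of degree $d_i$, so $\sum_i e_i d_i = n$. By the structure theorem, an $\FF_q[x]$-module with characteristic polynomial $f$ decomposes as a direct sum over the primes $f_i$, and for each $i$ the $f_i$-primary part is determined by a partition $\lambda^{(i)} \vdash e_i$ (the elementary divisor exponents). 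For each such tuple of partitions $(\lambda^{(i)})_i$ the number of $A$ realizing that module structure is the index $\lvert \GL_n(\FF_q)\rvert / \lvert Z(\lambda)\rvert$, where $Z(\lambda)$ is the automorphism group of the module — i.e., the order of the centralizer of a matrix of that type. So the first step is to assemble this as
\[
  \#\{A : \chi_A = f\} = \sum_{(\lambda^{(i)})_i} \frac{\lvert\GL_n(\FF_q)\rvert}{\prod_i \lvert\Aut_{\FF_q[x]}(M_{\lambda^{(i)}})\rvert},
\]
where $M_{\lambda^{(i)}}$ is the $f_i$-primary module of type $\lambda^{(i)}$ over the local ring $\FF_q[x]_{(f_i)}$ with residue field $\FF_{q^{d_i}}$.

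The second step is the asymptotic analysis in $q$. The quantity $\lvert\GL_n(\FF_q)\rvert = q^{n^2}\prod_{j=1}^n(1 - q^{-j}) = q^{n^2}\bigl(1 + O(q^{-1})\bigr)$. For the automorphism groups, the classical formula of Philip Hall for the order of the automorphism group of a finite abelian $p$-group of type $\lambda$ — applied over the residue field $\FF_{q^{d_i}}$ in place of $\FF_p$ — gives $\lvert\Aut(M_{\lambda^{(i)}})\rvert = q^{d_i \cdot (\text{quadratic form in }\lambda^{(i)})}\bigl(1 + O(q^{-1})\bigr)$, with leading exponent $d_i\sum_j (\lambda^{(i)\prime}_j)^2$ in the conjugate-partition notation. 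The key numerical fact is that this exponent is \emph{minimized}, over partitions of $e_i$, by the single-part partition $\lambda^{(i)} = (e_i)$, for which $\Aut(M) = \bigl(\FF_{q^{d_i}}[t]/t^{e_i}\bigr)^\times$ has order $q^{d_i e_i}(1 - q^{-d_i}) = q^{e_i d_i}\bigl(1 + O(q^{-1})\bigr)$; every other partition gives a strictly larger exponent, hence a contribution smaller by a factor $O(q^{-1})$. (Equivalently: the generic, i.e.\ cyclic/regular-semisimple-type, module for each prime dominates; this is the matrix analogue of the fact that a "random" matrix with given char poly is nonderogatory.) Summing, the dominant term is $\lvert\GL_n(\FF_q)\rvert / \prod_i q^{e_i d_i} = q^{n^2}/q^{\sum_i e_i d_i}\cdot\bigl(1+O(q^{-1})\bigr) = q^{n^2 - n}\bigl(1 + O(q^{-1})\bigr)$, and all other terms are absorbed into the error. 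The number of partition-tuples is bounded in terms of $n$ alone, so the implied constant depends only on $n$, as claimed.

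The main obstacle is the bookkeeping in the second step: one must verify uniformly that \emph{every} non-generic choice of module type contributes $O(q^{n^2 - n - 1})$, which requires the precise statement that $d_i\sum_j(\lambda^{(i)\prime}_j)^2 > d_i e_i$ whenever $\lambda^{(i)}$ is not the one-part partition, together with a clean bound on the number and size of the remaining terms. This is a purely combinatorial inequality about partitions (it follows since $\sum_j (\lambda'_j)^2 \geq \sum_j \lambda'_j = \lvert\lambda\rvert$ with equality iff all $\lambda'_j \leq 1$, i.e.\ $\lambda = (1^{\lvert\lambda\rvert})$ — wait, one wants the \emph{minimum} the other way, so one instead uses that among partitions of $m$ the quantity $\sum_j(\lambda'_j)^2 = \sum_i \lambda_i^2 + 2n(\lambda)$-type expressions are minimized at $\lambda=(m)$), so it is routine once set up correctly. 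Since the excerpt explicitly cites Reiner and Gerstenhaber for the exact formula, the cleanest exposition is simply to quote their count and then extract the $q\to\infty$ asymptotic from it; I would present the module-theoretic derivation only as much as needed to make the error term transparent.
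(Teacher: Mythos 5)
Your proof is correct, but note that the paper does not actually prove this proposition at all: it is stated as a quotation of Reiner's Theorem 2 and Gerstenhaber's work, i.e.\ the exact count is taken as known and the asymptotic is read off from it. What you supply is the standard self-contained derivation: stratify the matrices with $\chi_A = f$ by their similarity class, equivalently by the $\FF_q[x]$-module structure on $\FF_q^n$, parametrized by tuples of partitions $\lambda^{(i)} \vdash e_i$; apply orbit--stabilizer with $\lvert\GL_n(\FF_q)\rvert = q^{n^2}(1+O(1/q))$ and Hall's formula $\lvert\Aut(M_{\lambda})\rvert = q^{d\sum_j(\lambda'_j)^2}(1+O(1/q))$ over the residue field $\FF_{q^{d_i}}$; and observe that the cyclic type dominates. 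This buys transparency about where the error term comes from and why the constant depends only on $n$ (there are $O_n(1)$ partition tuples, and each correction factor is $1+O_n(1/q)$ uniformly in $f$), at the cost of redoing work the cited references already contain. The one place you wobble is the combinatorial inequality, and it resolves exactly the way you first wrote it: $\sum_j(\lambda'_j)^2 \geq \sum_j \lambda'_j = \lvert\lambda\rvert$ with equality iff every $\lambda'_j \leq 1$, i.e.\ iff $\lambda' = (1^{\lvert\lambda\rvert})$, i.e.\ iff $\lambda = (\lvert\lambda\rvert)$ is the \emph{one-part} partition --- you momentarily swapped $\lambda$ and $\lambda'$ in reading off the equality case. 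Since the exponent is an integer, any non-generic type loses at least $d_i \geq 1$ in the exponent, so all non-generic classes together contribute $O_n(q^{n^2-n-1})$, and the argument closes as you describe.
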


Observe that Lemma \ref{lem:index_rare} and Proposition \ref{prop:char_poly_eqdst} together imply that the fraction of matrices $A \in \Mat_n \FF_p$ with $\ind (\chi_A \bmod p) \geq k$ is $O(p^{-k})$. We are now ready to prove Theorem \ref{thm:geosieve}.

\begin{proof}[Proof of Theorem \ref{thm:geosieve}]

We separate the cases $C \leq T$ and $C > T$, corresponding to Bhargava's Cases I and III respectively.

If $C \leq T$, then the $A$ having any particular value of $D$ satisfy congruence conditions modulo $C$ for which the solution set has density $O(1/D)$. Therefore, there are at most $O(T^{n^2}/D)$ such $A$. Summing, the total contribution to $L_n(T)$ is
\[
\ll \sum_{D \geq T^2} \frac{T^{n^2}}{D} \ll T^{n^2 - 1}.
\]

We now assume $C \geq T$. Let $S = \Diag(1,2,\ldots, n)$; note that $\chi_S$ is separable modulo every prime $p \geq n$. For a matrix $A$, define the \emph{double discriminant} of $A$ to be
\[
  \DD(A) = \disc_{t} \disc_x \chi_{A + tS}(x) = \disc_t \disc_x \det(xI - tS - A).
\]
Note that $\DD(A)$ is an integer polynomial in the entries of $A$ that is invariant under translation by multiples of $S$. We can check that $\DD(A)$ is not identically zero by observing that for a general diagonal matrix $A$, there are the full $\binom{n}{2} = \deg_t \disc_x(xI - tS - A)$ values of $t$ that cause two of the diagonal entries of $A + tS$ to coincide.

If $p\mid C$, so that $\ind (\chi_A \bmod p) \geq 2$, we claim that $p \mid \DD(A)$. To see this, observe that the factorization constraint on $\chi_{A\bmod p}$ (a root of multiplicity at least $3$, or at least two double roots) makes $\chi_{A\bmod p}$ a singular point of the hypersurface of polynomials of discriminant $0$. Thus, in whatever direction $A$ is perturbed, $\disc \chi_{A\bmod p}$ will vanish to order at least $2$, making $\DD(A\bmod p) = 0$ (see also Proposition 4 in \cite{Bhargava_vdW_short}).

Therefore, the $A$ having a given $D$ are constrained by the relations
\begin{equation} \label{eq:DDmodC}
\disc \chi_A \equiv \DD(A) \equiv 0 \mod C.
\end{equation}

There are $O(T^{n^2 - 1})$ congruence classes of matrices $A$ modulo $\ZZ S$, because the $n-1$ linear combinations of diagonal entries
\[
  2a_{11} - a_{22}, \quad 3a_{11} - a_{33}, \quad \ldots, \quad n a_{11} - a_{nn},
\]
together with the $n^2 - n$ off-diagonal entries, determine $A$ up to $\ZZ S$ and are $O(T)$ in size. Also, in each congruence class there are $O(T)$ matrices of norm $\leq T$ because $a_{11}$ determines $A$.  Since $\DD(A)$ is a nonzero polynomial, there are $O(T^{n^2 - 1})$ matrices $A$ lying in congruence classes on which $\DD(A) = 0$. Excluding these, when $A = A_0 + tS$ is in a fixed congruence class, there are at most $2^{\omega(\DD(A))} \ll_\epsilon T^\epsilon$ choices of $C > T$ that divide $\DD(A)$. Then $t$ is constrained by
\begin{equation}\label{eq:last_entry}
  \disc \chi_A = \disc_x \det(xI - tS - A) \equiv 0 \mod C.
\end{equation}
We claim that \eqref{eq:last_entry} is a nonzero polynomial equation in $t$ modulo every prime $p \mid C$ with $p \geq n$. For this, note that $\det(xI - tS - A)$ is a polynomial of degree $n$ in $x$ and $t$ whose leading homogeneous piece is
\[
  \det(xI - tS) = \chi_{t S}(x) = (x - t)(x - 2 t)\cdots (x - n t).
\]
Hence $\disc_x \det(xI - tS - A)$ has leading term $t^{n^2 - n} \disc \chi_S \nequiv 0 \mod p$ and in particular is not identically $0$ modulo $p$.

Now \eqref{eq:last_entry} has $O(1)$ solutions modulo every prime $p \mid C$, and so $O(1)^{\omega(C)} \ll_\epsilon T^\epsilon$ solutions overall. Hence the total number of matrices $A$ in this case is $O_\epsilon(T^{n^2 - 1} \cdot T^\epsilon \cdot T^\epsilon) = O_\epsilon(T^{n^2 - 1 + \epsilon})$, as desired.
\end{proof}

\bibliography{suppl,matrix}

\newcommand{\etalchar}[1]{$^{#1}$}
\begin{thebibliography}{AGLO{\etalchar{+}}23}

\bibitem[ABO24]{ABO2024}
Theresa~C. Anderson, Adam Bertelli, and Evan~M. O'Dorney.
\newblock Galois groups of reciprocal polynomials and the van der
  {W}aerden--{B}hargava theorem, 2024.
\newblock Submitted.
  \href{arxiv.org/abs/2406.18970}{\texttt{arxiv:\allowbreak2406.\allowbreak18970}}.

\bibitem[AGLO{\etalchar{+}}23]{6author}
Theresa~C. Anderson, Ayla Gafni, Robert~J. Lemke~Oliver, David Lowry-Duda,
  George Shakan, and Ruixiang Zhang.
\newblock Quantitative {H}ilbert irreducibility and almost prime values of
  polynomial discriminants.
\newblock {\em Int. Math. Res. Not. IMRN}, 2023(3):2188--2214, 2023.

\bibitem[BGS25]{2025arXiv250217218B}
Lior {Bary-Soroker}, Daniele {Garzoni}, and Sasha {Sodin}.
\newblock {Irreducibility of the characteristic polynomials of random
  tridiagonal matrices}.
\newblock {\em arXiv e-prints}, page arXiv:2502.17218, February 2025.

\bibitem[Bha23]{Bhargava_vdW_short}
Manjul Bhargava.
\newblock A proof of van der {W}aerden's conjecture on random {G}alois groups
  of polynomials.
\newblock {\em Pure Appl. Math. Q.}, 19(1):45--60, 2023.

\bibitem[Bha25]{Bhargava_vdW}
Manjul Bhargava.
\newblock Galois groups of random integer polynomials and van der {W}aerden's
  conjecture.
\newblock {\em Ann. of Math. (2)}, 201(2):339--377, March 2025.

\bibitem[BL24]{BlomerLutsko2024}
Valentin Blomer and Christopher Lutsko.
\newblock Hyperbolic lattice point counting in unbounded rank.
\newblock {\em J. Reine Angew. Math.}, 2024(812):257--274, 2024.

\bibitem[BSK20]{BSK20}
Lior Bary-Soroker and Gady Kozma.
\newblock Irreducible polynomials of bounded height.
\newblock {\em Duke Math. J.}, 169(4):579--598, 2020.

\bibitem[BSKK23]{BSKK23}
Lior Bary-Soroker, Dimitris Koukoulopoulos, and Gady Kozma.
\newblock Irreducibility of random polynomials: general measures.
\newblock {\em Invent. Math.}, 233(3):1041--1120, 2023.

\bibitem[BV19]{BV19}
Emmanuel Breuillard and P\'{e}ter~P. Varj\'{u}.
\newblock Irreducibility of random polynomials of large degree.
\newblock {\em Acta Math.}, 223(2):195--249, 2019.

\bibitem[CD20]{CD2020}
Sam Chow and Rainer Dietmann.
\newblock Enumerative {G}alois theory for cubics and quartics.
\newblock {\em Adv. Math.}, 372:107282, 37, 2020.

\bibitem[Coh79]{Coh79}
S.~D. Cohen.
\newblock The distribution of the {G}alois groups of integral polynomials.
\newblock {\em Illinois J. Math.}, 23(1):135--152, 1979.

\bibitem[Ebe22]{Eberhard}
Sean Eberhard.
\newblock The characteristic polynomial of a random matrix.
\newblock {\em Combinatorica}, 42(4):491--527, 2022.

\bibitem[EMS96]{EMS96}
Alex Eskin, Shahar Mozes, and Nimish Shah.
\newblock Unipotent flows and counting lattice points on homogeneous varieties.
\newblock {\em Ann. of Math. (2)}, 143(2):253--299, 1996.

\bibitem[FJSS23]{FJSS}
Asaf Ferber, Vishesh Jain, Ashwin Sah, and Mehtaab Sawhney.
\newblock Random symmetric matrices: rank distribution and irreducibility of
  the characteristic polynomial.
\newblock {\em Math. Proc. Cambridge Philos. Soc.}, 174(2):233--246, 2023.

\bibitem[Gal73]{Gallagher1972}
P.~X. Gallagher.
\newblock The large sieve and probabilistic {G}alois theory.
\newblock In {\em Analytic number theory ({P}roc. {S}ympos. {P}ure {M}ath.,
  {V}ol. {XXIV}, {S}t. {L}ouis {U}niv., {S}t. {L}ouis, {M}o., 1972)}, Proc.
  Sympos. Pure Math., Vol. XXIV, pages 91--101. Amer. Math. Soc., Providence,
  RI, 1973.

\bibitem[Ger61]{Gerstenhaber}
Murray Gerstenhaber.
\newblock On the number of nilpotent matrices with coefficients in a finite
  field.
\newblock {\em Illinois J. Math.}, 5:330--333, 1961.

\bibitem[Gut25]{Guth}
Larry Guth.
\newblock Large value estimates in number theory, harmonic analysis, and
  computer science, 2025.
\newblock Preprint,
  \href{https://arxiv.org/abs/2503.07410}{\texttt{arxiv:\allowbreak2503\allowbreak.07410}}.

\bibitem[Hok23]{Hokken23}
David Hokken.
\newblock Counting (skew-)reciprocal {L}ittlewood polynomials with square
  discriminant, 2023.
\newblock Preprint, \href{arxiv.org/abs/2301.05656}{\texttt{arxiv:\allowbreak
  2301.\allowbreak 05656}}.

\bibitem[HOS24]{ShpI}
Philipp Habegger, Alina Ostafe, and Igor~E. Shparlinski.
\newblock Integer matrices with a given characteristic polynomial and
  multiplicative dependence of matrices, 2024.
\newblock Preprint,
  \href{https://arxiv.org/abs/2203.03880}{\texttt{arxiv:\allowbreak2203.\allowbreak03880}}.

\bibitem[Kat93]{Kat93}
Y.~R. Katznelson.
\newblock Singular matrices and a uniform bound for congruence groups of {${\rm
  SL}_n({\bf Z})$}.
\newblock {\em Duke Math. J.}, 69(1):121--136, 1993.

\bibitem[Kno55]{Knobloch1955}
Hans-Wilhelm Knobloch.
\newblock Zum {H}ilbertschen {I}rreduzibilit\"{a}tssatz.
\newblock {\em Abh. Math. Sem. Univ. Hamburg}, 19:176--190, 1955.

\bibitem[LOT22]{LOT20}
Robert~J. Lemke~Oliver and Frank Thorne.
\newblock Upper bounds on number fields of given degree and bounded
  discriminant.
\newblock {\em Duke Math. J.}, 171(15):3077--3087, 2022.

\bibitem[OS24]{OSNonDiag}
Alina Ostafe and Igor~E. Shparlinski.
\newblock On the sparsity of non-diagonalisable integer matrices and matrices
  with a given discriminant, 2024.
\newblock Preprint,
  \href{https://arxiv.org/abs/2312.12626}{\texttt{arxiv:\allowbreak2312\allowbreak.12626}}.

\bibitem[Rei61]{Reiner}
Irving Reiner.
\newblock On the number of matrices with given characteristic polynomial.
\newblock {\em Illinois J. Math.}, 5:324--329, 1961.

\bibitem[Riv06]{Rivin06pre}
Igor Rivin.
\newblock Counting reducible matrices, polynomials, and surface and free group
  automorphisms, 2006.
\newblock Preliminary version of \cite{Rivin08}.
  \href{https://arxiv.org/abs/math/0604489}{\texttt{arxiv:math/0604489}}.

\bibitem[Riv08]{Rivin08}
Igor Rivin.
\newblock Walks on groups, counting reducible matrices, polynomials, and
  surface and free group automorphisms.
\newblock {\em Duke Math. J.}, 142(2):353--379, 2008.

\bibitem[Sch95]{Schmidt}
Wolfgang~M. Schmidt.
\newblock Number fields of given degree and bounded discriminant.
\newblock {\em Ast\'{e}risque}, 4(228):189--195, 1995.
\newblock Columbia University Number Theory Seminar (New York, 1992).

\bibitem[Ser13]{SerreMW}
Jean-Pierre Serre.
\newblock {\em Lectures on the {M}ordell-{W}eil Theorem}.
\newblock Aspects of Mathematics. Vieweg+Teubner Verlag, Wiesbaden, 3rd
  edition, 2013.

\bibitem[Shp10]{Shp09}
I.~E. Shparlinski.
\newblock Some counting questions for matrices with restricted entries.
\newblock {\em Linear Algebra Appl.}, 432(1):155--160, 2010.

\bibitem[vdW36]{vdW1936}
B.~L. van~der Waerden.
\newblock Die {S}eltenheit der reduziblen {G}leichungen und der {G}leichungen
  mit {A}ffekt.
\newblock {\em Monatsh. Math. Phys.}, 43(1):133--147, 1936.

\bibitem[{Zyw}10]{Zywina}
David {Zywina}.
\newblock {Hilbert's irreducibility theorem and the larger sieve}.
\newblock {\em arXiv e-prints}, page arXiv:1011.6465, November 2010.

\end{thebibliography}
\bibliographystyle{alpha}

\end{document}